\theoremstyle{plain}
\newtheorem{theorem}{Theorem}
\newtheorem*{theorem*}{Theorem}
\newtheorem{lemma}{Lemma}
\newtheorem*{corollary*}{Corollary}
\newtheorem{definition}{Definition}
\newtheorem*{remark*}{Remark}
\def\mathscr{\mathfrak}
\newcommand{\ds}{\displaystyle}
\newcommand{\ov}{\overline}
\def\R{{\mathbb R}}
\def\N{{\mathbb N}}
\def\Z{{\mathbb Z}}
\def\C{{\mathbb C}}
\def\s{\sigma}
\def\l{\lambda}
\def\E{\mathcal{E}}
\def\D{\mathcal{D}}
\def\L{\mathcal{L}}
\def\LL{\Lambda}
\def\A{\mathbb{A}}
\def\G{\mathbb{G}}
\def\P{\mathbb{P}}
\def\EE{\mathbb{E}}
\def\d{\partial}
\def\cxi{\widetilde{\xi}}
\def\S{\widetilde{S}}
\def\SS{\mathcal{S}}
\def\NN{\mathcal{N}}
\date{}
\begin{document}

\title{The asymptotic trace norm of random circulants and the graph energy}

\author{Sergiy Koshkin\\
 Department of Mathematics and Statistics\\
 University of Houston-Downtown\\
 One Main Street\\
 Houston, TX 77002\\
 e-mail: koshkins@uhd.edu}
\maketitle
\begin{abstract}\

We compute the expected normalized trace norm (matrix/graph energy) of random symmetric band circulant matrices and graphs in the limit of large sizes, and obtain explicit bounds on the rate of convergence to the limit, and on the probabilities of large deviations. We also show that random symmetric band T\"oeplitz matrices have the same limit norm assuming that their band widths remain small relative to their sizes. We compare the limit norms across a range of related random matrix and graph ensembles. 
\bigskip

\textbf{Keywords}: random matrix; graph energy; matrix energy; circulant; Toeplitz matrix; band matrix; Dirichlet kernel; non-uniform Berry-Esseen estimate; Talagrand concentration inequality
\end{abstract}

\section{Introduction}\label{S0}

The energy of a graph was originally introduced by Gutman, and was motivated by applications to organic chemistry, see 
\cite[1.1]{LSG} and references therein. It is a spectral invariant of a graph equal to the trace norm of its adjacency matrix, i.e. to the sum of its singular values. Gutman's original conjecture that complete graphs have the greatest energy among all graphs with the same number of vertices $n$ was disproved dramatically by Nikiforov \cite{Nik}, see also \cite{DLL}, who showed that for large $n$ the energy of almost all graphs is greater. Nikiforov's result was based on identifying the ensemble of random graphs on $n$ vertices, or rather their adjacency matrices, with a particular instance of the Wigner ensemble \cite{Bai}. The normalized asymptotic graph energy was then computed using its limit spectral distribution, the semicircle law. After Nikiforov's paper the term "matrix energy" is sometimes used to call the trace norm of a matrix, even if it is not an adjacency matrix of a graph. For Hermitian matrices, up to normalization, the matrix energy is none other than the first absolute moment of the matrix's spectral distribution.

The limit spectral distributions are known for a number of other random matrix ensembles, some of them corresponding to  interesting ensembles of random graphs, particularly ensembles of band symmetric \cite{MPK}, symmetric circulant 
\cite{BM}, symmetric T\"oeplitz \cite{HM}, and symmetric band T\"oeplitz matrices \cite{Karg}. The limit spectral distribution for the ensembles of regular random graphs are also known \cite{McK, TVW}. In many cases they are found by the Wigner's original method of moments \cite{Bai,DLL}, implying that the distributions converge to the limit with all their absolute moments. Therefore, once the limit distribution is known the asymptotic energy can be found by elementary integration analogous to Nikiforov's. However, symmetric circulants and band T\"oeplitz random matrices have so few independent entries that the method of moments does not work for them. But the eigenvalues of circulants can be expressed explicitly as weighted sums of their entries, so the Central Limit Theorem (CLT) can be used instead. For symmetric circulants convergence in distribution to a Gaussian limit was established in \cite{BM}, see also \cite{Meck} for the non--Hermitian case. For symmetric T\"oeplitz matrices without the band structure the method of moments does work, the limit spectral distribution is non--Gaussian with sub--Gaussian even moments \cite{HM}. 

For symmetric band T\"oeplitz matrices and symmetric band circulants convergence in distribution to the same Gaussian limit is proved in \cite{Karg}, assuming their band widths remain small relative to their sizes. The proof relies on modifying band T\"oeplitz matrices into band circulants in an asymptotically negligible way using a classical trick \cite[4.3]{Gray}, and then applying CLT. However, an extra limit has to be taken after CLT, and convergence of moments (even of variances) does not follow, let alone any estimates on its rate. Moreover, it is assumed in \cite{BM}, \cite{Karg} that the random entries are centered, whereas for graph adjacency matrices they are Bernoullian with positive means. Surprisingly, it is dealing with the non-zero means that turns out to be the hardest because of the singular limit behavior of the Dirichlet kernel, see Section \ref{S4}. 

In this paper we will establish convergence of the trace norms (matrix energies) to the first absolute moments of the limit spectral distribution for band symmetric circulant (and T\"oeplitz) random matrices and graphs. Moreover, we will produce explicit estimates on the rate of convergence and on the probabilities of large deviations. We chose to focus the exposition on circulants rather than T\"oeplitz matrices not only because the estimates are cleaner, but also because circulant graphs are easier to visualize, and their energies were a subject of much research lately \cite{BSh,IlBa,LeSa,SaSa}.

A graph is called a circulant if its vertices can be identified with vertices of a regular polygon, and its edges, with some of its sides and diagonals, in such a way that geometric symmetries of the polygon induce graph isomorphisms. In particular, all vertices look the same as far as connections to other vertices are concerned. The band condition means that the edges can only join vertices located at a bounded distance from each other along the perimeter of the polygon. T\"oeplitz graphs are trickier to describe, see \cite{D-Z}, roughly they are obtained from circulants by removing a few vertices and all adjacent edges so that the polygon opens up. Most of the recent studies of the graph energy of circulants use number theoretic methods, our work complements them with analytic and probabilistic approaches. In particular, we rely on analytic properties of the Dirichlet kernel, the Berry-Esseen estimates and a Talagrand concentration inequality.

The paper is organized as follows. In Section \ref{S1} we introduce the notation and terminology and give precise formulations of our main results for matrix ensembles, Theorems \ref{AnbThm}--\ref{ToepCirc}. In Section \ref{S2} the matrix results are applied to circulant graphs, and we compare their expected asymptotic energy to the energies of other graph ensembles, general, band, band T\"oeplitz, regular, and to the average energy of circulants with a fixed degree. In Section \ref{S3} we outline the main steps in the proofs of Theorems \ref{AnbThm} and \ref{LrgDevThm} with technical details worked out in Sections \ref{S4}--\ref{S6}, Theorem \ref{ToepCirc} is proved in Section \ref{S7}. In Section \ref{S8} we discuss rate estimates, convergence of higher moments and a generalization to block-circulant matrices and graphs.

\section{Preliminaries and main results}\label{S1}

In this section we introduce the necessary notation and terminology from linear algebra, probability theory and graph theory that are used in the paper. We also give precise statements of our main results, Theorems \ref{AnbThm}--\ref{ToepCirc}.
\begin{definition}\label{circmatrix} An $n\times n$ matrix $A=(a_{ij})$ is called a {\sl circulant} if each of its rows is the right cyclic shift of the row above, i.e. $a_{ij}=a_{(j-i)\!\!\mod n}$ for some tuple of $a_k\in\C$, $k=0,\dots,n-1$. We denote 
$$
\text{\rm Circ}(a_0,\dots,a_{n-1}):=\big(a_{(j-i)\!\!\mod n}\big)
=\begin{pmatrix}a_0&a_1&a_2&\ldots&a_{n-1}\\
a_{n-1}&a_0&a_1&\ldots&a_{n-2}\\
a_{n-2}&a_{n-1}&a_0&\ldots&a_{n-3}\\
\vdots&\vdots&\vdots&\ddots&\vdots\\
a_1&a_2&a_3&\ldots&a_0\\
\end{pmatrix}.
$$
A circulant $A$ is {\sl band with band width $b$} if $a_k=0$ for $b<k<n-b$\,. 
\end{definition}
\noindent Let $A:=\text{\rm Circ}(a_0,\dots,a_{n-1})$, it is well-known \cite[3.1]{Gray}, \cite[8.6]{Lan} that the eigenvalues of $A$ are $\l_r(A)=\sum_{k=0}^{n-1}a_k\omega^{k\,r}$, where $\omega$ is any primitive $n$-th root of unity, e.g. $\omega=e^{\frac{2\pi i}n}$. $A$ is Hermitian if and only if $a_{n-k}=\ov{a}_k$ for $k:=1,\dots,n-1$ and $a_0\in\R$ (for even $n$ also $a_{\frac{n}2}\in\R$). For Hermitian $A$ we can write the eigenvalues in explicitly real form
\begin{equation}\label{HermEigen}
\l_r(A)=\begin{cases}
\ds{a_0+(-1)^ra_{\frac{n}2}+2\,\sum_{k=1}^{\frac{n}2-1}\,\textrm{Re}\big(a_k\,e^{\frac{2\pi i}n\,k\,r}\big)}, &n\text{ even}\\
\ds{a_0+2\,\sum_{k=1}^{\frac{n-1}2}\,\textrm{Re}\big(a_k\,e^{\frac{2\pi i}n\,k\,r}\big)}, &n\text{ odd}.
\end{cases}
\end{equation}
From now on we assume that $A$ is a real symmetric circulant, i.e. the tuple $a_k$ is palindromic and all $a_k\in\R$, and for simplicity we also assume $a_0=0$, and $a_{\frac{n}2}=0$ for even $n$. Then \eqref{HermEigen} simplifies to 
\begin{equation}\label{SymEigen}
\l_r(A)=2\,\sum_{k=1}^{\lfloor\frac{n-1}2\rfloor}a_k\cos\Big(2\pi k\,\frac{r}n\Big)\,
\end{equation}
for all $n\in\N$\,, where $\lfloor x\rfloor$ returns the largest integer smaller than or equal to $x$. Under our assumptions for real symmetric band circulants with band width $b$ to exist one must have 
$b<\frac{n}2$, and for such circulants $b$ can replace $\lfloor\frac{n-1}2\rfloor$ in the upper summation limit of \eqref{SymEigen}\,.

In the next definition we follow the terminology of \cite{Nik}, which is common in the literature on graph energy, but what is defined is better known in linear algebra and functional analysis as the {\sl trace norm} of 
a matrix \cite[III.7]{GK}.  
\begin{definition} The {\sl matrix energy} of an $n\times n$ matrix $A$ is $\E(A):={\rm tr}(|A|)=\sum_{r=1}^{n}s_r(A)$, where $s_r(A)$ are the singular values of $A$, i.e. the positive square roots of the eigenvalues of $A^*A$, and $|A|:=(A^*A)^{\frac12}$.
\end{definition}
\noindent When $A$ is Hermitian, in particular real symmetric as in our case, $s_r(A)=|\l_r(A)|$ \cite[II.2]{GK}, so 
$\E(A):=\sum_{r=1}^{n}|\l_r(A)|$, where $\l_r(A)$ are the eigenvalues of $A$. For real symmetric band circulants $A=\text{\rm Circ}(a_0,\dots,a_{n-1})$ with $b<\frac{n}2$ and $a_0=0$ one obtains from \eqref{SymEigen}
\begin{equation}\label{EnrgCirc}
\E(A)=2\,\sum_{r=1}^n\left|\sum_{k=1}^ba_k\cos\Big(2\pi k\,\frac{r}n\Big)\right|.
\end{equation}
We will be interested in the asymptotic behavior of the matrix energy when $a_k$ are random variables and $n,b\to\infty$. Our assumptions that $a_0=0$ and $a_{\frac{n}2}=0$ will not affect the results since their contributions are asymptotically negligible.
\begin{definition} Denote by $\A_{n,b}$ the {\sl ensemble of $n\times n$ random real symmetric band circulants} with band width $b<\frac{n}2$ and $a_0=0$ given by $\text{\rm Circ}(0,\xi_1,\dots,\xi_b,0,\dots,0,\xi_b,\dots,\xi_1)$, where $\xi_k$ are independent identically distributed real random variables with expected value $a:=\EE\xi_k$ and finite variance $\s^2:=\EE|\xi_k-a|^2$\,. By abuse of notation $\A_{n,b}$ will also denote a random element of the ensemble.
\end{definition}
\noindent From \eqref{EnrgCirc} the normalized matrix energy of a random circulant is
\begin{equation}\label{NormEnrgCirc}
\frac1n\,\E(\A_{n,b})=2\,\sum_{r=1}^n\left|\sum_{k=1}^ba_k\cos\Big(2\pi k\,\frac{r}n\Big)\right|\frac1n\,.
\end{equation}
\noindent Denoting $S_b(t):=\sum_{k=1}^ba_k\cos\big(2\pi k\,t\big)$ we see that the right hand side of \eqref{NormEnrgCirc} is twice a Riemann sum of $|S_b(t)|$ on $[0,1]$. Since $|S_b(t)|$ is obviously continuous its Riemann sums converge to the Riemann integral $\frac1n\,\E(\A_{n,b})\xrightarrow[n\to\infty]{}2\int_0^1|S_b(t)|\,dt$. The idea of treating normalized spectral functions of T\"oplitz matrices and circulants as Riemann sums, and computing their limit values as Riemann integrals, is classical and goes back to Szeg\"o \cite[4.3]{GSz}. In its turn, $S_b(t)$ for each $t\in[0,1]$ is a sum of independent real random variables with finite variances, albeit not identically distributed. One expects from the Centeral Limit Theorem (CLT) \cite[III.4.1]{Shir} that $\frac1{\sqrt{b}}\,S_b(t)$ is asymptotically Gaussian when $b\to\infty$. 

Thus, it is reasonable to conjecture that the expected values $\frac1{n\sqrt{b}}\,\EE\,\E(\A_{n,b})$ have a limit when $n,b\to\infty$, and that the sample values $\frac1{n\sqrt{b}}\,\E(\A_{n,b})$ concentrate around the limit values with high probability for large $n,b$. The main purpose of this paper is to establish these facts with explicit estimates on the rates of convergence and on the probabilities of large deviations. Two technical issues are immediately apparent. First, we do not assume that the random variables are centered, so the means will have to be controlled separately. Second, our heuristic argument above applied to the repeated limit $n\to\infty$ and then $b\to\infty$, not to the double limit $n,b\to\infty$. To prove that the double limit exists one needs a uniform in $n$ bound on the difference between the Riemann sums and the integrals as 
$b\to\infty$. It is not obvious that $\frac1{\sqrt{b}}\,S_b(t)$ admit such a bound, and therefore that the double limit exists at all.

The limit spectral distributions for ensembles of large random circulant and related matrices were considered by several authors recently \cite{BM,BD,HM,Meck}, and the limit spectral distribution for real symmetric band circulant matrices is proved to be Gaussian in \cite{Karg}. The last work does not use the method of moments, common for the Wigner and similar matrix ensembles \cite[2.1.1]{Bai}, and only establishes convergence in distribution for centered random variables 
$\xi_k$. We on the other hand are interested in convergence of the first absolute moments (matrix energies) for off-centered variables, and explicit estimates on its rate and probabilities of large deviations. The approach of 
\cite{Karg} is therefore unsuitable for our purposes. We now state our main asymptotic result.
\begin{theorem}\label{AnbThm} Let the ensemble $\A_{n,b}$ be specified by independent identically distributed real random variables $\xi_k$ with the mean $\EE\xi_k=:a$, the variance $\EE|\xi_k-a|^2=:\s^2<\infty$, and the central third moment 
$\EE|\xi_k-a|^3=:\mu_3<\infty$\,. Then
\begin{multline}\label{AnbEst}
\left|\frac1{n\sqrt{b}}\,\EE\E(\A_{n,b})-\frac{\,2\,\s\,}{\sqrt{\pi}}\right|
\leq\frac{8\pi C_1}{3\sqrt{3}}\,\frac{\mu_3}{\s^2\sqrt{b}}
+\frac{\,4\,}{\sqrt{b}}\left(|a|+\frac\s{\sqrt{\pi b}}\right)\left(1+\frac1{\pi^2}\ln b\right)\\
+\frac{\,2\,}{n\sqrt{b}}\left(|a|+\frac{2\,\s}{\sqrt{\pi b}}\right)\left(1+(2b+1)(1+\gamma+\ln b)\right)
= O\left(\frac{\ln b}{\sqrt{b}}\right)\,,
\end{multline}
where $C_1<31.954$ and $\gamma$ is the Euler constant.
\end{theorem}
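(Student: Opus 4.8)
The plan is to reduce the statement to a pointwise analysis of $\tfrac1{\sqrt b}\EE|S_b(t)|$ and then to average over the Riemann grid $t=r/n$. From \eqref{NormEnrgCirc} we have $\tfrac1{n\sqrt b}\EE\E(\A_{n,b})=\tfrac2{n\sqrt b}\sum_{r=1}^n\EE\big|S_b(\tfrac rn)\big|$, so I would first split each summand as $S_b(t)=\S_b(t)+m_b(t)$, where $\S_b(t):=\sum_{k=1}^b(\xi_k-a)\cos(2\pi kt)$ is centered and $m_b(t):=a\sum_{k=1}^b\cos(2\pi kt)=\tfrac a2\big(D_b(t)-1\big)$ is the deterministic mean shift, $D_b$ denoting the Dirichlet kernel. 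The centered part has variance $v_b(t)=\s^2\sum_{k=1}^b\cos^2(2\pi kt)=\tfrac{\s^2b}2+\tfrac{\s^2}4\big(D_b(2t)-1\big)$, i.e. $\tfrac{\s^2b}2$ up to a bounded Dirichlet-type correction. The target constant $\tfrac{2\s}{\sqrt\pi}$ is exactly twice the Gaussian first absolute moment $\sqrt{2v/\pi}/\sqrt b$ evaluated at the leading variance $v=\s^2b/2$.

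The first main step is a Gaussian approximation of $\EE|S_b(t)|$. As $\S_b(t)$ is a weighted sum of independent, non-identically distributed variables, I would apply a non-uniform Berry-Esseen estimate comparing its distribution function with that of $N(0,v_b(t))$; the governing Lyapunov ratio is $\lesssim\mu_3/(\s^3\sqrt b)$ because $\sum_{k=1}^b|\cos(2\pi kt)|^3\le\sum_{k=1}^b\cos^2(2\pi kt)\approx b/2$. Writing $\EE|S_b(t)|=\int_0^\infty\big(\P(S_b>s)+\P(S_b<-s)\big)\,ds$, the non-uniform $(1+|x|^3)^{-1}$ decay is precisely what makes the resulting integral of $|F-\Phi|$ converge and transfers the distributional closeness into closeness of first absolute moments. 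Since the absolute moment scales like $\s\sqrt b$, the relative Berry-Esseen error $\mu_3/(\s^3\sqrt b)$ becomes an absolute error $\sim\mu_3/\s^2$, and dividing by $\sqrt b$ produces the first term $\tfrac{8\pi C_1}{3\sqrt3}\,\tfrac{\mu_3}{\s^2\sqrt b}$, with $C_1$ the non-uniform Berry-Esseen constant.

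It then remains to evaluate the Gaussian moment $\EE|N(m_b(t),v_b(t))|$ and to replace $m_b$ and $v_b$ by their leading behaviour. For the mean I would use the elementary bound $\big|\EE|N(m,v)|-\sqrt{2v/\pi}\big|\le|m|$, so the error caused by $a\ne0$ is at most $|m_b(t)|$; summing over $r$ and invoking the Lebesgue constant $\int_0^1|D_b(t)|\,dt\sim\tfrac4{\pi^2}\ln b$ yields the logarithmic term $\tfrac4{\sqrt b}\big(|a|+\tfrac\s{\sqrt{\pi b}}\big)\big(1+\tfrac1{\pi^2}\ln b\big)$, the correction from replacing $v_b(t)$ by $\s^2b/2$ being absorbed through $D_b(2t)$ in the same way. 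Finally I would bound the discrepancy between the Riemann sum $\tfrac1n\sum_r$ and the integral $\int_0^1$ by the classical total-variation estimate of order (total variation)$/n$; the relevant integrand inherits the oscillation of $D_b$, whose total variation is $O\big((2b+1)(\ln b+\gamma)\big)$ --- one factor from its $\approx 2b+1$ oscillations and the harmonic-sum factor $\ln b+\gamma$ from their decaying amplitudes --- giving the last term $\tfrac2{n\sqrt b}\big(|a|+\tfrac{2\s}{\sqrt{\pi b}}\big)\big(1+(2b+1)(1+\gamma+\ln b)\big)$ and explaining the appearance of the Euler constant.

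The main obstacle is the singular behaviour of the Dirichlet kernel near $t=0,1$ (and the degeneracy of the variance $v_b(t)$ near $t=0,\tfrac12,1$). Because the entries are not centered, the shift $m_b(t)$ blows up like $1/t$ at the origin; the saving grace is that its integral grows only logarithmically, but extracting this growth with explicit constants and uniformly in $n$ is delicate. At the same points the Gaussian approximation degenerates since $v_b(t)\to0$, so small neighbourhoods of the singular points must be estimated separately and shown to contribute negligibly. Reconciling these two competing effects while keeping the Riemann-sum error uniform in $n$ --- so that the genuine double limit $n,b\to\infty$ exists rather than an iterated one --- is the crux of the argument.
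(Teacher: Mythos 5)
Your proposal follows essentially the same route as the paper's proof: center the variables and bound the mean contribution through Riemann sums of the Dirichlet kernel (Lebesgue constants plus a total-variation bound of order $b\ln b$); convert the non-uniform Berry--Esseen inequality into a first-absolute-moment estimate by integrating the distributional error, with the crucial cancellation $\sum_k|\cos(2\pi kt)|^3\le\sum_k\cos^2(2\pi kt)$; and then replace the Gaussian variance by its limit $\s^2 b/2$ using the same Dirichlet-kernel machinery, all three Riemann-sum errors being kept uniform in $n$ by the total-variation bound.

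One correction to your closing paragraph, though: the variance does \emph{not} degenerate near $t=0,\tfrac12,1$. At those points $\frac1b\sum_{k=1}^b\cos^2(2\pi kt)\to1$ rather than $\tfrac12$, so the variance is \emph{maximal} there, not vanishing, and no excision of neighbourhoods of these points is needed --- their contribution is already absorbed by the Lebesgue-constant and total-variation estimates applied to $\D_b(4\pi t)$. Relatedly, your intermediate step ``$\sum_k\cos^2(2\pi kt)\approx b/2$'' is neither uniformly true nor necessary: once the Berry--Esseen error is written, after moment integration, as a constant multiple of $\bigl(\sum_k\EE|X_k|^3\bigr)/B_b$ (one power of $\sqrt{B_b}$ having been released by the integration), the inequality $|\cos|^3\le\cos^2$ bounds it by $\mu_3/\s^2$ pointwise in $t$, with no lower bound on the variance required. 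This is exactly the ``lucky cancellation'' the paper exploits, and it is what makes the first-moment case work where higher moments would genuinely require a lower bound on the minimum of the Dirichlet kernel.
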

\noindent The rate of convergence can be improved to $O\left(\frac1{\sqrt{b}}\right)$ if $\frac{\,2\,\s\,}{\sqrt{\pi}}$ is replaced by a $b$ and $n$ dependent expression obtained by computing the first absolute moments of the Gaussian variables off--centered by the means of $\frac1{\sqrt{b}}\sum_{k=1}^b\xi_k\cos\left(2\pi k\,\frac{r}n\right)$. We do not pursue such improvement here.

To estimate the probabilities of large deviations we follow the approach of \cite{GZ} based on a concentration inequality of Talagrand \cite{Tal}. It requires stronger assumptions on the random variables than finite third moments, but provides  exponential bounds in return. In view of applications to graph theory we will assume that $\xi_k$ take values in a finite interval. There is also a version of Talagrand inequalities for variables with distributions satisfying a log--Sobolev inequality \cite{GZ}. For variables only assumed to have finite moments polynomial bounds can be derived as e.g. in \cite[6.1]{HM}. 
\begin{theorem}\label{LrgDevThm} Let the ensemble $\A_{n,b}$ be specified by independent identically distributed random variables $\xi_k$ taking values in the interval $[0,R]$, then 
\begin{equation}\label{TalaDevEst}
\mathbb{P}\left\{\left|\frac1{n\sqrt{b}}\,\E(\A_{n,b})-\frac1{n\sqrt{b}}\,\EE\E(\A_{n,b})\right|\geq\delta\right\}\leq4\,e^{-\frac{b}{8R^2}\big(\delta-\delta_0(b)\big)^2}\,, 
\end{equation}
where $\delta_0(b):=4\sqrt{\frac{2\pi R^2}b}$\,.
\end{theorem}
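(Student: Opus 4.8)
The plan is to realize the normalized energy as a convex, Lipschitz function of the $b$ independent bounded variables $\xi_1,\dots,\xi_b$, apply Talagrand's inequality to get concentration about a median, and then exchange the median for the mean. Writing $c_r:=\big(\cos(2\pi k\,\tfrac rn)\big)_{k=1}^{b}\in\R^{b}$, formula \eqref{EnrgCirc} lets me regard
\[
F(\xi):=\frac1{n\sqrt b}\,\E(\A_{n,b})=\frac{2}{n\sqrt b}\sum_{r=1}^{n}\big|\langle c_r,\xi\rangle\big|
\]
as a function $F\colon\R^{b}\to\R$. Each map $\xi\mapsto|\langle c_r,\xi\rangle|$ is convex and the outer coefficients are nonnegative, so $F$ is convex; combined with the hypothesis $\xi_k\in[0,R]$ this is exactly the setting of the convex Talagrand inequality used in \cite{GZ,Tal}.

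The decisive step is to obtain the right Lipschitz constant. The crude term-by-term estimate $\mathrm{Lip}(F)\le\frac{2}{n\sqrt b}\sum_r\|c_r\|$ only yields an $O(1)$ constant and destroys the gain in $b$ that the exponent needs. Instead I would apply the Cauchy--Schwarz inequality to the sum over $r$ together with the orthogonality relation $\sum_{r=1}^{n}\cos(2\pi k\tfrac rn)\cos(2\pi k'\tfrac rn)=\frac n2\,\delta_{kk'}$, which holds for $1\le k,k'\le b$ precisely because the band condition $b<\frac n2$ prevents $k\pm k'$ from being $\equiv0\pmod n$ unless $k=k'$. This gives $\sum_r\langle c_r,v\rangle^2=\frac n2\|v\|^2$, hence $\sum_r|\langle c_r,v\rangle|\le\sqrt n\,\sqrt{n/2}\,\|v\|$, and therefore $|F(\xi)-F(\eta)|\le\sqrt{2/b}\,\|\xi-\eta\|$, i.e. $\mathrm{Lip}(F)\le\sqrt{2/b}$. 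The factor $1/\sqrt b$ here is what will place $b$ in the numerator of the exponent.

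Next I would rescale to the unit cube by writing $\xi_k=R\,U_k$ with $U_k\in[0,1]$; then $G(U):=F(RU)$ is convex with $\mathrm{Lip}(G)\le R\sqrt{2/b}$, and Talagrand's convex concentration inequality on $[0,1]^{b}$ (the form giving $4\,e^{-t^2/4}$ for a $1$-Lipschitz convex function) yields
\[
\P\big\{\,|F-M_F|\ge s\,\big\}\le 4\,e^{-\frac{b}{8R^2}\,s^{2}}
\]
about a median $M_F$, the constant being $4\,\mathrm{Lip}(G)^{2}=8R^{2}/b$. Finally I replace $M_F$ by $\EE F$: integrating the tail bound gives $|\EE F-M_F|\le\EE|F-M_F|=\int_0^\infty\P\{|F-M_F|\ge s\}\,ds\le\int_0^\infty4\,e^{-\frac{b}{8R^2}s^{2}}\,ds=4\sqrt{2\pi R^2/b}=\delta_0(b)$. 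Since $\{|F-\EE F|\ge\delta\}\subseteq\{|F-M_F|\ge\delta-\delta_0(b)\}$ whenever $\delta\ge\delta_0(b)$, substituting $s=\delta-\delta_0(b)$ into the displayed bound produces \eqref{TalaDevEst}.

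I expect the only real obstacle to be the sharp Lipschitz estimate: the entire strength of the result rests on extracting the constant $\sqrt{2/b}$ rather than an $O(1)$ one, and this is exactly where the cosine orthogonality---and with it the band hypothesis $b<\frac n2$---enters. Once that is in hand, the passage from median to mean is a routine Gaussian integration that reproduces $\delta_0(b)$ exactly, and the rescaling to $[0,1]^b$ converts the range $[0,R]$ into the stated $R^2$ in the exponent.
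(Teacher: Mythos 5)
Your proof is correct, and its overall architecture coincides with the paper's: Talagrand's convex-concentration inequality \eqref{TalaComp}, a Lipschitz bound $L_F\leq\sqrt{2/b}$ for the normalized energy, and the median-to-mean exchange by integrating the tail, which reproduces $\delta_0(b)=4\sqrt{2\pi R^2/b}$ exactly. The one place where you genuinely diverge is the proof of the key Lipschitz estimate. The paper obtains it (Lemmas \ref{Liptrabs} and \ref{LipEnrg}) by a matrix-analytic route: the Guionnet--Zeitouni gradient bound \eqref{trder} for ${\rm tr}f(A)$ as a function of the entries of a symmetric matrix, followed by smooth approximation of $f(x)=|x|$, giving $\big|{\rm tr}|A|-{\rm tr}|B|\big|\leq\sqrt{2n}\,\bigl(\sum_{i\leq j}|a_{ij}-b_{ij}|^2\bigr)^{1/2}$, and then specialization to circulants. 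You instead exploit the explicit eigenvalue formula: the orthogonality $\sum_{r=1}^n\cos\bigl(2\pi k\,\tfrac rn\bigr)\cos\bigl(2\pi k'\,\tfrac rn\bigr)=\tfrac n2\,\delta_{kk'}$ for $1\leq k,k'\leq b<\tfrac n2$ (valid precisely because neither $k-k'$ nor $k+k'$ can vanish mod $n$ unless $k=k'$) gives $\sum_r\langle c_r,v\rangle^2=\tfrac n2\|v\|^2$ exactly, and Cauchy--Schwarz over the index $r$ then yields the same $\sqrt{2/b}$. Note that the paper explicitly warns that Cauchy--Schwarz ``only gives $L_F\leq\sqrt 2$''; that remark refers to the term-by-term estimate $|\langle c_r,v\rangle|\leq\|c_r\|\,\|v\|$, which you also correctly dismiss as $O(1)$. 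Your version --- compute the quadratic mean exactly first, apply Cauchy--Schwarz second --- sidesteps that trap and is, if anything, more elementary than the paper's: it needs no smoothing of $|x|$ and no matrix calculus. What the paper's route buys in exchange is generality: Lemma \ref{Liptrabs} holds for arbitrary symmetric matrices and arbitrary Lipschitz spectral functions, so it transfers to other ensembles (for instance the block generalizations contemplated in Section \ref{S8}), whereas your argument is tied to the circulant eigenvector structure. One small point in your favor: you state explicitly that the final substitution requires $\delta\geq\delta_0(b)$, a restriction the paper's proof uses but leaves implicit.
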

\noindent From Theorems \ref{AnbThm}, \ref{LrgDevThm} one can conclude that almost surely
\begin{equation}\label{AnbAsEnrg}
\E(\A_{n,b})=n\sqrt{b}\left(\frac{\,2\,\s\,}{\sqrt{\pi}}+o(1)\right),
\end{equation}
a weaker but simpler statement of our main results.

As in \cite{Karg} we also consider a closely related class of T\"oeplitz matrices.
\begin{definition}\label{Toepmatrix} A matrix $A=(a_{ij})$ is called {\sl T\"oeplitz} if each of its main diagonals contains identical entries, in other words $a_{ij}=a_{j-i}$ for some tuple of $a_k\in\C$, $k=-(n-1),\dots,n-1$. 
We denote 
$$
\text{\rm T\"oep}(a_{-(n-1)},\dots,a_{-1},a_0,a_1,\dots,a_{n-1}):=\big(a_{j-i}\big)
=\begin{pmatrix}a_0&a_1&a_2&\ldots&a_{n-1}\\
a_{-1}&a_0&a_1&\ldots&a_{n-2}\\
a_{-2}&a_{-1}&a_0&\ldots&a_{n-3}\\
\vdots&\vdots&\vdots&\ddots&\vdots\\
a_{-(n-1)}&a_{-(n-2)}&a_{-(n-3)}&\ldots&a_0\\
\end{pmatrix}.
$$
For Hermitian (real symmetric) matrices $a_{-k}=\ov{a_k}$, and we shorten the notation to\\ $\text{\rm T\"oep}_s(a_0,a_1,\dots,a_{n-1})$\,. A T\"oeplitz matrix is {\sl band with band width $b$} if $a_k=0$ for $|k|>b$\,. 
\end{definition}
\noindent One can see from Definitions \ref{circmatrix} and \ref{Toepmatrix} that every circulant matrix is T\"oeplitz, but the converse is not true. However, if a T\"oeplitz matrix is band with band width $b<\frac{n}2$ then it can be turned into a circulant by altering entries in its upper right and lower left corners, see \cite[4.3]{Gray} and Section \ref{S7}.  
\begin{definition} Denote by $\widetilde{\A}_{n,b}$ the {\sl ensemble of $n\times n$ random real symmetric band T\"oeplitz matrices with band width $b<\frac{n}2$} and $a_0=0$ given by 
$\text{\rm T\"oep}_s(0,\xi_1,\dots,\xi_b,0,\dots,0)$, where $\xi_k$ are independent identically distributed real random variables with expected value $a:=\EE\xi_k$ and finite variance 
$\s^2:=\EE|\xi_k-a|^2$\,.
\end{definition}
\noindent The corner trick change is asymptotically negligible for matrix sizes large relative to band widths, so the limit spectral distributions are the same as long as $\frac{b}{n}\to0$. The same holds in the Hermitian (real symmetric) case \cite{Karg}. Moreover, we prove the following.
\begin{theorem}\label{ToepCirc} In conditions of Theorem \ref{AnbThm}
\begin{equation}\label{ToepCircEq}
\frac1{n\sqrt{b}}\,\left|\EE\E(\widetilde{\A}_{n,b})-\EE\E(\A_{n,b})\right|\leq\frac{(\EE\xi_k^2)^\frac12}n\sqrt{2\,b(b+1)}
\leq2\,(|a|+\s)\,\frac{b}n\,.
\end{equation}
\end{theorem}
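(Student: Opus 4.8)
The plan is to couple the two ensembles on a common probability space by realizing both $\A_{n,b}$ and $\widetilde{\A}_{n,b}$ from the \emph{same} i.i.d.\ sequence $\xi_1,\dots,\xi_b$. Since $\EE\E(\A_{n,b})$ and $\EE\E(\widetilde{\A}_{n,b})$ depend only on the respective marginal laws, this coupling changes neither expectation and lets me argue pathwise. I then set $D:=\A_{n,b}-\widetilde{\A}_{n,b}$ and use the corner trick \cite[4.3]{Gray} to locate its support. Comparing Definitions \ref{circmatrix} and \ref{Toepmatrix} entrywise, the $(i,j)$ entry is governed by $(j-i)\bmod n$ for the circulant and by $j-i$ for the T\"oeplitz matrix; on the main band $|i-j|\le b$ these coincide, so $D$ vanishes there and is supported only in the upper-right and lower-left $b\times b$ corner blocks, which are disjoint because $b<\frac{n}2$. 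A direct count shows that in each corner the variable $\xi_k$ occurs exactly $k$ times for $k=1,\dots,b$.

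Next I would reduce the quantity of interest to the energy of $D$. Because $\E(\cdot)={\rm tr}(|\cdot|)$ is the trace norm, hence a genuine norm on matrices \cite[III.7]{GK}, the triangle inequality gives pathwise
\begin{equation*}
\big|\E(\widetilde{\A}_{n,b})-\E(\A_{n,b})\big|\le\E\big(\widetilde{\A}_{n,b}-\A_{n,b}\big)=\E(D).
\end{equation*}
Taking expectations and using $|\EE X|\le\EE|X|$ yields $\big|\EE\E(\widetilde{\A}_{n,b})-\EE\E(\A_{n,b})\big|\le\EE\,\E(D)$, so it remains to bound $\EE\,\E(D)$.

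To control $\E(D)=\sum_r s_r(D)$ I would pass to the Frobenius norm $\|D\|_F$ via Cauchy--Schwarz, $\E(D)\le\sqrt{\operatorname{rank}D}\,\|D\|_F$. Since $D$ is carried by two disjoint $b\times b$ corner blocks, $\operatorname{rank}D\le 2b$. Jensen's inequality gives $\EE\|D\|_F\le(\EE\|D\|_F^2)^{1/2}$, and the multiplicity count above (together with $\EE\xi_1^2=\EE\xi_k^2$ by the i.i.d.\ assumption) evaluates the Frobenius moment exactly,
\begin{equation*}
\EE\|D\|_F^2=2\,\EE\xi_1^2\sum_{k=1}^{b}k=b(b+1)\,\EE\xi_1^2.
\end{equation*}
Combining these estimates yields $\EE\,\E(D)\le\sqrt{2b}\,\big(b(b+1)\,\EE\xi_k^2\big)^{1/2}$, and dividing by $n\sqrt{b}$ produces the first inequality of \eqref{ToepCircEq} with the stated constant $\sqrt{2b(b+1)}$. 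The second inequality is then immediate from $\EE\xi_k^2=a^2+\s^2$, so that $(\EE\xi_k^2)^{1/2}\le|a|+\s$, together with $\sqrt{2b(b+1)}\le 2b$ for $b\ge1$.

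The only genuinely delicate step is the first: correctly pinning down the support of $D$, certifying $\operatorname{rank}D\le 2b$, and counting the multiplicities of the $\xi_k$ in the corners. Everything downstream is a routine chain of the triangle inequality, Cauchy--Schwarz, and Jensen, but it is precisely the corner-trick combinatorics where an index or sign slip would spoil the exact matching of the constant $\sqrt{2b(b+1)}$; so I would verify the support and multiplicities carefully, ideally by writing out the two corner blocks explicitly before assembling the norm estimates.
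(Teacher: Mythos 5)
Your proposal is correct and follows essentially the same route as the paper: couple the two ensembles via the corner trick, apply the reverse triangle inequality for the trace norm, bound the trace norm of the corner difference by $\sqrt{\mathrm{rank}}$ times the Frobenius norm (Cauchy--Schwarz), and finish with Jensen and the multiplicity count $\sum_{k=1}^b k=\frac{b(b+1)}2$. The only cosmetic difference is that you run Cauchy--Schwarz on the full difference matrix $D$ with $\operatorname{rank}D\le 2b$, whereas the paper works with the single $b\times b$ corner block $\LL$ and its $b$ singular values; the two computations are identical and yield the same constant.
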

\noindent Analogs of Theorems \ref{AnbThm}, \ref{LrgDevThm} for $\widetilde{\A}_{n,b}$ can now be readily stated, we leave formulations to the reader. The main difference is that in addition to $b\to\infty$ one also needs $\frac{b}n\to0$ for the asymptotics to hold.

\section{Comparison of random graph energies}\label{S2}

In this section we interpret the results of Theorems \ref{AnbThm}, \ref{LrgDevThm} for random circulant graphs, and compare them to the graph energies of other random graph ensembles. We start with a precise definition of the circulant graphs \cite{BSh}.
\begin{definition}\label{circgraph} Let $\Z_n$ denote the set of residue classes modulo $n$, 
$\Z_n^*:=\Z_n\backslash\{0\}$ and $J\subseteq\Z_n^*$ be a subset. A {\sl circulant graph generated by $J$} is the graph with vertices labeled by the elements of $\Z_n$ with the $i$--th and the $j$--th vertex joined by an edge if and only if $j-i\in J\cup-J$ (we do not consider directed edges so $J$ has to be symmetrized). Without loss of generality, one can choose a minimal $J=\{j_1,\dots,j_m\}$, where $j_1<\dots<j_m\leq n-j_m<\dots<n-j_1$. We call these $j_i$ the {\sl jump sizes}, and denote $G\langle J\rangle=G\langle\,j_1,\dots,j_m\rangle$ the circulant graph generated by $J$.
\end{definition}
\noindent If one thinks of the vertices of a circulant as vertices of a regular polygon with unit sides the jump sizes are the distances traveled from a vertex to other vertices joined with it by an edge, when moving counterclockwise along the perimeter. Since the jump sizes are the same for all vertices the isometries of the polygon induce graph isomorphisms of the circulant. The band width restriction means that edges can only join vertices within the distance $b$ of each other along the perimeter. 

Recall that the adjacency matrix of a graph $G$ with the vertices labeled $1,\dots,n$ is the $n\times n$ matrix $A_G$ with $a_{ij}=1$ if the $i$--th and the $j$--th vertices are joined by an edge, and $a_{ij}=0$ otherwise \cite[1.1]{LSG}.  Identifying the elements of $\Z_n$ with $1,\dots,n$ it is easy to see from Definitions \ref{circmatrix} and \ref{circgraph} that $G$ is a circulant graph if and only if $A_G$ is a symmetric circulant $0,1$ matrix with zeros on the diagonal. Specifically, $\ds{A_{G\langle\,j_1,\dots,j_m\rangle}=\textrm{Circ}(a_0,\dots,a_{n-1})}$ with $a_j=1$ provided $j=j_l$ or $j=n-j_l$ for some $l$, and $a_j=0$ otherwise.
\begin{definition} Denote by $\G_{n,b}(p)$ the {\sl ensemble of random circulant graphs on $n$ vertices with jump sizes bounded by $b<\frac{n}2$} defined as follows. For each number from $1$ to $b$ one independently chooses it as a jump size with probability $p$\,. Once  $j_1<\dots<j_m$ are so chosen the corresponding random circulant graph is $G\langle\,j_1,\dots,j_m\rangle$\,.
\end{definition}
\noindent The corresponding ensemble of the adjacency matrices $A_{\G_{n,b}(p)}$ is easily identified with the ensemble 
$\A_{n,b}$, where the random variables $\xi_k$ are Bernoullian with probability of success $p$ \cite[I.4.1]{Shir}. We will denote the latter $\A_{n,b}(p)$ to distinguish from the general case. 

Historically, the graph energy defined next was introduced before the matrix energy for general matrices, see references in \cite[1.1]{LSG}. 
\begin{definition} The {\sl graph energy} of a graph $G$ is defined to be the trace norm (matrix energy) of its adjacency matrix 
$\E(G):=\E(A_G)={\rm tr}(|A_G|)$.
\end{definition}
\noindent We consider the graph energy of large random circulant graphs. For $\A_{n,b}(p)$ the mean is $a:=\EE\xi_k=p$ and the variance is $\s^2:=\EE|\xi_k-a|^2=p\,(1-p)$. Therefore, formula \eqref{AnbAsEnrg} becomes
\begin{equation}\label{GnbpAsEnrg}
\E\Big(\G_{n,b}(p)\Big)=n\sqrt{b}\left(\frac{\,2\,}{\sqrt{\pi}}\sqrt{p\,(1-p)}+o(1)\right).
\end{equation}
\noindent Note that for the ensemble $\G_{n}(p)$ of all random graphs on $n$ vertices with each edge 
having probability $p$ the value is almost surely \cite{DLL}, \cite[6.1]{LSG}:
\begin{equation}\label{GnpAsEnrg}
\E\Big(\G_{n}(p)\Big)=n\sqrt{n}\left(\frac{\,8\,}{3\pi}\sqrt{p\,(1-p)}+o(1)\right),
\end{equation}
and $\frac{\,2\,}{\sqrt{\pi}}>\frac{\,8\,}{3\pi}$. However, since the largest permissible value of $b$ is of the order $\frac{n}2$ and $\frac{\,2\,}{\sqrt{2\pi}}<\frac{\,8\,}{3\pi}$ random circulant graphs are on average less energetic than the general ones. For the $p=\frac12$ case considered earlier by Nikiforov \cite{Nik} the asymptotic becomes
\begin{equation}\label{Gnp1/2AsEnrg}
\E\left(\G_{n}\left(\frac12\right)\right)=n\sqrt{n}\left(\frac{\,4\,}{3\pi}+o(1)\right).
\end{equation}
\noindent A graph $G$ is called hyperenergetic if $\E(G)>\E(K_n)=2n-2$, where $K_n$ is the complete graph on $n$ vertices \cite{Ghyp}, \cite[8.1]{LSG}. It follows from \eqref{GnbpAsEnrg} that for large $n,b$ almost all circulant graphs are hyperenergetic, just as almost all general graphs are for large $n$. 

When $p=\frac12$, i.e. every pair of vertices within distance $b$ of each other is as likely as not to be joined by an edge, we have 
\begin{equation}\label{Gnb1/2AsEnrg}
\E\left(\G_{n,b}\left(\frac12\right)\right)=n\sqrt{b}\left(\frac{\,1\,}{\sqrt{\pi}}+o(1)\right).
\end{equation}
This is similar to the average energy $\E_{av}$ of circulants defined in \cite{BSh}. However, for $\E_{av}$ the average is taken over the set $\mathcal{T}(n,d)$ of circulant graphs on $n$ vertices with exactly $d$ jump sizes, whereas we average over circulants with the values of jump sizes bounded by $b$. Still, the comparison is instructive since for $p=\frac12$  approximately half of $a_1,\dots,a_b$ are likely to be $1$-s, in which case half of $a_{n-b},\dots,a_{n-1}$ are also 
$1$-s, so $d\sim b$ in $\G_{n,b}\left(\frac12\right)$ with high probability. It is shown in \cite[Thm.5]{BSh} that 
\begin{equation}\label{TndAsEnrg}
n\sqrt{d}\left(\frac{\,1\,}{\sqrt{3}}+o(1)\right)\leq\E_{av}\Big(\mathcal{T}(n,d)\Big)
\leq n\sqrt{d}\Big(1+o(1)\Big)\,.
\end{equation}
Fixing the number of jump sizes allows equally probable edges to join distant vertices, whereas our ensemble always selects edges form a $b$-neighborhood of a vertex. Therefore, formulas \eqref{Gnb1/2AsEnrg}, \eqref{TndAsEnrg} and $\frac1{\sqrt{\pi}}<\frac1{\sqrt{3}}$ imply that more uniform distribution of edges increases the graph energy for large $n,d$.

By Theorem \ref{ToepCirc}, for band symmetric T\"oeplitz matrices formula \eqref{AnbAsEnrg} also holds if additionally
$\frac{b}n\xrightarrow[n,b\to\infty]{}0$. Let us compare their energy to that of general random band symmetric 
$n\times n$ matrices with band width $b$. Denoting their ensemble $\mathbb{B}_{n,b}$ according to \cite{MPK} when $\frac{b}n\xrightarrow[n,b\to\infty]{}0$ the eigenvalue distributions of $\mathbb{B}_{n,b}$ matrices normalized by $\frac1{\sqrt{b}}$ converge with all the moments to the semicircle law $\ds{\frac{\,1\,}{2\pi\s^2}\sqrt{4\s^2-x^2}\ \mathbb{I}_{[-2\s,2\s]}(x)}$, where $\mathbb{I}_S$ is the characteristic function of a set $S$. Therefore,
\begin{equation*}
\frac1{n\sqrt{b}}\,\E(\mathbb{B}_{n,b})\xrightarrow[n,b\to\infty]{}\frac{\,1\,}{2\pi\s^2}\int_{-2\s}^{2\s}|x|\sqrt{4\s^2-x^2}\,dx=\frac8{3\pi}\,\s\,,
\end{equation*}
and almost surely 
\begin{equation}\label{BnbAsEnrg}
\E(\mathbb{B}_{n,b})=n\sqrt{b}\left(\frac{\,8\,}{3\pi}\,\s+o(1)\right)\,.
\end{equation}
Thus, large random band symmetric T\"oeplitz matrices are more energetic on average than large general band symmetric  matrices of the same size and band width since $\frac{\,2\,}{\sqrt{\pi}}>\frac{\,8\,}{3\pi}$. When $\s=\sqrt{p\,(1-p)}$ this applies also to the corresponding random graphs. Interestingly enough for general symmetric T\"oeplitz matrices (without the band condition) the limiting distribution is neither Gaussian, nor semicircle, but a new one without an analytic expression for density and with sub-Gaussian even moments \cite{HM}. However, for general symmetric circulants the limit law is still Gaussian \cite{BM}.

Finally, we look at the energy of $d$--regular graphs. A graph is $d$--regular if each vertex has exactly $d$ edges adjacent to it \cite[1.4.1]{LSG}. The circulant graphs are always $d$--regular with $d=2m$, twice the number of jump sizes in the minimal representation of Definition \ref{circgraph}. It is shown in \cite{McK} using the method of moments that the spectral distributions of $d$--regular graphs converge to the Kesten law for $n\to\infty$ assuming that $\frac{c_k}n\to0$ for $k\geq3$, where $c_k$ is the number of cycles of length $k$ in the graph. 

The Kesten law density is $\frac{\,d\,}{2\pi}\frac{\sqrt{4(d-1)-x^2}}{d^2-x^2}\ \mathbb{I}_{[-2\sqrt{d-1},\,2\sqrt{d-1}]}(x)$. Denoting by $\R_{n,d}$ the ensemble of equally likely $d$--regular graphs on $n$ vertices we have, see also \cite{NikR}:
\begin{multline}\label{RndNmAsEnrg}
\frac1{n}\,\E(\R_{n,d})\xrightarrow[n\to\infty]{}\frac{\,d\,}{2\pi}\int_{-2\sqrt{d-1}}^{2\sqrt{d-1}}|x|\,\frac{\sqrt{4(d-1)-x^2}}{d^2-x^2}\,dx\\
=\frac{2d\,\sqrt{d-1}}{\pi}\left(1-\frac{d-2}{2\sqrt{d-1}}\ 
\tan^{-1}\frac{2\sqrt{d-1}}{d-2}\right)\,.
\end{multline}
Using the Taylor expansion of $\tan^{-1}(x)$ at $x=0$ one can see that for large $d$ the right hand side of 
\eqref{RndNmAsEnrg} is asymptotically equivalent to $\frac{\,8\,}{3\pi}\sqrt{d}$, hence 
\begin{equation}\label{RndAsEnrg}
\E(\R_{n,d})=n\sqrt{d}\left(\frac{\,8\,}{3\pi}+o(1)\right).
\end{equation}
The asymptotic is only valid, however, if $\frac{d^k}n\to0$ for all integer $k\geq3$ since the expected values of $c_k$ converge to $\frac{(d-1)^k}{2k}$ \cite{McK}. It should be compared to \eqref{Gnb1/2AsEnrg} because with high probablity $d\sim b$ in $\G_{n,b}\left(\frac12\right)$. Since $\frac1{\sqrt{\pi}}<\frac{\,8\,}{3\pi}$ general $d$--regular graphs are more energetic than the corresponding band circulants for relatively small $d$. Comparison to \eqref{TndAsEnrg} is inconclusive since $\frac1{\sqrt{3}}<\frac{\,8\,}{3\pi}<1$. Nikiforov shows in \cite{NikR} based on a result of \cite{TVW} that when both 
$d$ and $n-d$ tend to infinity with $n$ we have
\begin{equation}\label{RndAsEnrg}
\E(\R_{n,d})=n\sqrt{d(1-d/n)}\left(\frac{\,8\,}{3\pi}+o(1)\right).
\end{equation}
Comparing to \eqref{Gnp1/2AsEnrg} we see that the regular graphs with $d\sim n$ can be far less energetic than the general ones.

\section{Outline of proof}\label{S3}

Since the proofs of Theorems \ref{AnbThm}, \ref{LrgDevThm} involve many technicalities and cumbersome formulas we outline the main steps in this section. Namely, we sketch the computation of the normalized expected asymptotic energy of random band symmetric circulant matrices, and the estimation of its difference with the energies of finite matrices and of probabilities of large deviations. 

From \eqref{NormEnrgCirc} we have 
\begin{equation}\label{NbEnrgCirc}
\frac1{n\sqrt{b}}\,\EE\E(\A_{n,b})=\frac2{n\sqrt{b}}\,\sum_{r=1}^n\EE\left|\sum_{k=1}^b\xi_k\cos\Big(2\pi k\,\frac{r}n\Big)\right|\,,
\end{equation}
where $\xi_k$ are independent identically distributed random variables with mean $a$ and variance $\s^2$. We start by centering $\xi_k$ and splitting off the mean part of the sum under the absolute value sign. Set $\cxi_k:=\xi_k-a$, then the following estimate follows directly from \eqref{NbEnrgCirc}
\begin{multline}\label{CnEq}
\left|\frac1{n\sqrt{b}}\,\EE\E(\A_{n,b})-\frac2{n\sqrt{b}}\,\sum_{r=1}^n\EE\left|\sum_{k=1}^b\cxi_k\cos\Big(2\pi k\,\frac{r}n\Big)\right|\right|\\
\leq\frac{2|a|}{n\sqrt{b}}\,\sum_{r=1}^n\left|\sum_{k=1}^b\cos\Big(2\pi k\,\frac{r}n\Big)\right|\,.
\end{multline}
To estimate the right hand side of \eqref{CnEq} we notice that $\ds{\sum_{r=1}^n\left|\sum_{k=1}^b\cos\Big(2\pi k\,\frac{r}n\Big)\right|\frac1n}$ is a Riemann sum of a well-known function.
\begin{definition}\label{DirKer} The {\sl Dirichlet kernel} is the following function \cite[12.2]{Gold}
\begin{equation}\label{DirKer}
\D_b(t):=\sum_{k=-b}^be^{ikt}=1+2\sum_{k=1}^b\cos(kt)
=\begin{cases}\ds{\frac{\sin\Big(b+\frac12\Big)t}{\sin\Big(\frac{t}2\Big)}}, &t\neq0\\
2b+1, &t=0\,.\end{cases}
\end{equation}
\end{definition}
\noindent By inspection, the corresponding Riemann integral is $\ds{\int_0^1|\D_b(2\pi t)-1|\,dt}$, and it can be estimated via the so--called {\sl Lebesgue constants} $\ds{\L_b:=\frac1\pi\int_0^\pi|\D_b(t)|\,dt}$ \cite[4.2]{Finch}. 
It remains to bound the difference between the sum and the integral. A curious quirk is that we are using the integral to estimate the sums rather than the other way around, as is common in numerical analysis. The usual estimate in terms of Lipschitz constants is too rough for our purposes since the Lipschitz constant of the Dirichlet kernel grows as $b^2$. However, the estimate in terms of the total variation \cite[5.5]{DR} $\ds{\Big|\sum_{r=1}^nf\left(\frac{r}n\right)\frac1n-\int_0^1f(t)\,dt\Big|\leq\frac1n\textrm{Var}_{[0,1]}(f)}$ works better because, as we prove in Section \ref{S4}, the total variation of the Dirichlet kernel only grows as $b\ln b$\,. We were unable to find a suitable bound on the kernel's total variation in the existing literature despite the classical nature of the subject. Combining the estimates we get the following.
\begin{lemma}\label{MeanPart} With the notation above
\begin{multline}\label{MeanEst}
\left|\frac1{n\sqrt{b}}\,\EE\E(\A_{n,b})-\frac2{n\sqrt{b}}\,\sum_{r=1}^n\EE\left|\sum_{k=1}^b\cxi_k\cos\Big(2\pi k\,\frac{r}n\Big)\right|\right|\\
\leq\frac{\,2\,|a|\,}{\sqrt{b}}\left(2+\frac2{\pi^2}\ln b+\frac{1+(2b+1)(1+\gamma+\ln b))}{n}\right)
= O\left(\frac{\ln b}{\sqrt{b}}\right)\,,
\end{multline}
where $\gamma$ is the Euler constant.
\end{lemma}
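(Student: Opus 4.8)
The plan is to build directly on the estimate \eqref{CnEq}, which already bounds the quantity in \eqref{MeanEst} by $\frac{2|a|}{\sqrt b}$ times the averaged sum $\frac1n\sum_{r=1}^n\big|\sum_{k=1}^b\cos(2\pi k\,\frac rn)\big|$. It therefore suffices to show that this averaged sum does not exceed $2+\frac2{\pi^2}\ln b+\frac1n\big(1+(2b+1)(1+\gamma+\ln b)\big)$. The first move is to identify the inner sum with the Dirichlet kernel: by \eqref{DirKer} one has $\sum_{k=1}^b\cos(kt)=\frac12(\D_b(t)-1)$, so writing $f(t):=\big|\sum_{k=1}^b\cos(2\pi k\,t)\big|=\frac12|\D_b(2\pi t)-1|$ turns the averaged sum into the right-endpoint Riemann sum $\frac1n\sum_{r=1}^n f(r/n)$. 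I would then split the estimate into the limiting integral $\int_0^1 f$ and the Riemann-sum error, the latter controlled by the total-variation inequality $\big|\frac1n\sum_{r=1}^n f(r/n)-\int_0^1 f\big|\le\frac1n\mathrm{Var}_{[0,1]}(f)$ of \cite[5.5]{DR}.

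For the integral, substituting $s=2\pi t$ and using that $\D_b$ is even and $2\pi$-periodic gives $\int_0^1 f=\frac1{4\pi}\int_0^{2\pi}|\D_b(s)-1|\,ds\le\frac1{4\pi}\big(\int_0^{2\pi}|\D_b(s)|\,ds+2\pi\big)=\frac12(\L_b+1)$, where $\L_b=\frac1\pi\int_0^\pi|\D_b|$ is the Lebesgue constant. Feeding in the classical explicit bound $\L_b\le 3+\frac4{\pi^2}\ln b$ from \cite[4.2]{Finch} then yields $\int_0^1 f\le 2+\frac2{\pi^2}\ln b$, which supplies exactly the first two terms of the target.

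For the variation term I would first reduce $f$ to the kernel itself. Since the absolute value is $1$-Lipschitz, $\mathrm{Var}_{[0,1]}(f)\le\mathrm{Var}_{[0,1]}\big(\tfrac12(\D_b(2\pi\cdot)-1)\big)=\tfrac12\mathrm{Var}_{[0,2\pi]}(\D_b)$, using invariance of total variation under the monotone reparametrization $s=2\pi t$ and under subtracting a constant, together with its homogeneity under scaling. The total variation of $\D_b$ over a full period is the crux, and I would invoke the bound established in Section \ref{S4}, which is calibrated so that $\mathrm{Var}_{[0,1]}(f)\le 1+(2b+1)(1+\gamma+\ln b)$. Adding the integral and variation contributions, multiplying through by $\frac{2|a|}{\sqrt b}$, and observing that $\frac{\ln b}{\sqrt b}$ dominates every term gives both the explicit estimate and the $O(\ln b/\sqrt b)$ claim.

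The main obstacle is the total-variation estimate for $\D_b$ itself. The naive route through the Lipschitz constant is fatal here: since $\D_b'$ is of order $b^2$, it would only give a Riemann error of size $O(b^2/n)$, far too large to control. The whole argument hinges on the fact that, despite the peak value $\D_b(0)=2b+1$, the variation grows only like $b\ln b$; establishing this (deferred to Section \ref{S4}) is the real work. Granting it, the remaining steps are the routine integral computation and the bookkeeping of constants, where the only delicate point is to use a sharp enough Lebesgue-constant bound so that the coefficient of $\ln b$ comes out to precisely $\frac2{\pi^2}$.
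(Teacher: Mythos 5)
Your proposal is correct and follows essentially the same route as the paper: starting from \eqref{CnEq}, rewriting the cosine sum via the Dirichlet kernel, splitting into the integral (bounded through the Lebesgue constant estimate $\L_b\leq3+\frac4{\pi^2}\ln b$) plus the Riemann-sum error (bounded by $\frac1n$ times the total variation, reduced by evenness, periodicity and constant-subtraction to ${\rm Var}_{[0,\pi]}(\D_b)$), and then invoking the $b\ln b$ variation bound of Section \ref{S4}. The constants you obtain match \eqref{MeanEst} exactly, and your deferral of the total-variation estimate mirrors the paper's own division of labor between Lemmas \ref{Dbpimt}, \ref{DbTotVar} and the proof of Lemma \ref{MeanPart}.
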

Next we have to deal with the expected value of the centered sum $\ds{\EE\left|\sum_{k=1}^b\cxi_k\cos\Big(2\pi k\,\frac{r}n\Big)\right|}$\,. It is convenient to denote $\ds{\S_b(t):=\sum_{k=1}^b\cxi_k\cos\Big(2\pi k\,t\Big)}$, which is a weighted sum of independent identically distributed random variables with mean $0$ and finite variance $\s^2$\,. According to the central limit theorem (CLT) the normalized sums $\ds{\frac1{\sqrt{b}}\S_b(t)}$ converge in distribution to a Gaussian random variable as long as their variances have a limit and the Lindeberg condition is satisfied 
\cite[III.4.1]{Shir}. Verifying the conditions of CLT is the approach taken in \cite{Karg}. Since we are interested in  convergence of the first absolute moments with explicit rate estimates we use instead the method of \cite{Daug} of deriving moment estimates from the non-uniform Berry-Esseen bounds. An alternative approach to moment estimates was developed earlier in \cite{vBr}, but the resulting inequalities are less precise.

Let $X_k$ be a sequence of independent(not necessarily identically distributed) random variables with finite variances and set $\SS_b:=\sum_{k=1}^bX_k$, $B_b:=\sum_{k=1}^b\EE|X_k|^2$. Also denote $\NN_{a,\s}$ a Gaussian random variable with mean $a$ and variance $\s^2$. The estimate we use is 
\begin{equation}\label{SumGauss}
\Big|\EE|\SS_b|-\EE|\NN_{0,\sqrt{B_b}}|\Big|\leq\frac{2\pi C_1}{3\sqrt{3}}\frac{\sum_{k=1}^b\EE|X_k|^3}{\sum_{k=1}^b\EE|X_k|^2}\,,
\end{equation}
where $C_1$ is the best constant from the non-uniform Berry--Esseen inequality. The best current estimate is 
$C_1<31.954$ \cite{Pad}, but it is expected that the actual value of $C_1$ is smaller by an order of magnitude \cite{Pin}\,. In our case $X_k=\cxi_k\cos(2\pi kt)$, so that $\SS_b=\S_b(t)$ and $B_b=\s^2\sum_{k=1}^b\cos^2(2\pi kt)$\,. Denoting $\mu_3:=\EE|\cxi_k|^3$ one can see that $\ds{\frac{\sum_{k=1}^b\EE|X_k|^3}{\sum_{k=1}^b\EE|X_k|^2}\leq\frac{\mu_3}{\s^2}}$. Therefore  we obtain the following.
\begin{lemma}\label{CLTPart} Assume that $\xi_k$ and hence $\cxi_k$ have finite third moments. Then in the notation above for any $t\in[0,1]$:
\begin{equation}\label{CLTEst}
\left|\EE\left|\sum_{k=1}^b\cxi_k\cos\Big(2\pi k\,t\Big)\right|-\EE\left|\NN_{0,\,\s\sqrt{\sum_{k=1}^b\cos^2\left(2\pi k\,t\right)}}\right|\right|\leq\frac{4\pi C_1}{3\sqrt{3}}\,\frac{\mu_3}{\s^2}\,,\\
\end{equation}
where $C_1<31.954$\,.
\end{lemma}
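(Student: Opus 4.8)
The plan is to realize the centered weighted sum $\S_b(t)=\sum_{k=1}^{b}\cxi_k\cos(2\pi k t)$ as an instance of the generic sum in the Berry--Esseen moment bound \eqref{SumGauss}, and then to control the resulting Lyapunov-type ratio uniformly in $t$. First I would fix $t\in[0,1]$ and set $X_k:=\cxi_k\cos(2\pi k t)$ for $k=1,\dots,b$. Because the $\cxi_k$ are independent with mean $0$ and the cosine factors are deterministic constants, the $X_k$ are independent (though not identically distributed, which is exactly the generality \eqref{SumGauss} permits) and satisfy $\EE X_k=0$, so \eqref{SumGauss} applies with $\SS_b=\S_b(t)$. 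The variance bookkeeping matches exactly: $B_b=\sum_{k=1}^{b}\EE|X_k|^2=\s^2\sum_{k=1}^{b}\cos^2(2\pi k t)$, so $\sqrt{B_b}=\s\sqrt{\sum_{k=1}^{b}\cos^2(2\pi k t)}$ and the comparison Gaussian $\NN_{0,\sqrt{B_b}}$ is precisely the one named in \eqref{CLTEst}.

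With this identification, \eqref{SumGauss} immediately bounds the left-hand side of \eqref{CLTEst} by $\frac{2\pi C_1}{3\sqrt3}$ times the ratio $\big(\sum_{k=1}^{b}\EE|X_k|^3\big)\big/\big(\sum_{k=1}^{b}\EE|X_k|^2\big)$, so the whole proof reduces to estimating this ratio. Here I would compute $\EE|X_k|^2=\s^2\cos^2(2\pi k t)$ and $\EE|X_k|^3=\mu_3|\cos(2\pi k t)|^3$, pull the moments out, and write the ratio as $\frac{\mu_3}{\s^2}\cdot\frac{\sum_{k=1}^{b}|\cos(2\pi k t)|^3}{\sum_{k=1}^{b}\cos^2(2\pi k t)}$.

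The one genuinely substantive point is that this trigonometric ratio is at most $1$ \emph{uniformly in $t$}, which follows at once from the pointwise inequality $|\cos(2\pi k t)|^3\leq\cos^2(2\pi k t)$ (since $|\cos|\leq1$) summed over $k$. This collapses the $t$-dependent Lyapunov ratio to the $t$-free quantity $\mu_3/\s^2$, and I regard this uniformity as the real content of the lemma: although the variances $B_b=\s^2\sum\cos^2(2\pi k t)$ vary strongly with $t$ and can be small for special $t$, the Gaussian-approximation error is bounded by a constant that is blind to $t$ --- exactly what the later Riemann-sum integration over $t$ will require. The only edge case, a $t$ for which every $\cos(2\pi k t)=0$ so that $B_b=0$, is harmless: there $\S_b(t)=0$ almost surely, the comparison Gaussian degenerates to $0$, and the left-hand side vanishes. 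Assembling the pieces gives \eqref{CLTEst}, in fact with the sharper constant $\frac{2\pi C_1}{3\sqrt3}$ and hence a fortiori with the stated $\frac{4\pi C_1}{3\sqrt3}$.

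I expect essentially no obstacle at the level of this lemma once \eqref{SumGauss} is in hand; the substantive work lies in \eqref{SumGauss} itself. Establishing it means passing from the non-uniform Berry--Esseen estimate $|F(x)-\Phi(x)|\leq C_1 L_b\,(1+|x|)^{-3}$ for the standardized sum (with $L_b=\sum_{k=1}^{b}\EE|X_k|^3/B_b^{3/2}$) to a first-absolute-moment bound, via the tail identity $\EE|\SS_b|-\EE|\NN_{0,\sqrt{B_b}}|=\sqrt{B_b}\int_0^{\infty}\big[(F(-x)-\Phi(-x))-(F(x)-\Phi(x))\big]\,dx$ and integration of the non-uniform bound against the convergent weight $(1+|x|)^{-3}$; this is where the finite third moment and the constant of \cite{Daug} enter. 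Granting \eqref{SumGauss}, the present lemma is merely that specialization together with the elementary cosine inequality above.
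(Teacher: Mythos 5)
Your proof is correct and follows essentially the same route as the paper: specialize the Berry--Esseen moment bound to $X_k=\cxi_k\cos(2\pi k t)$, identify $B_b=\s^2\sum_{k=1}^b\cos^2(2\pi k t)$, and collapse the Lyapunov ratio to $\mu_3/\s^2$ using $|\cos(2\pi k t)|^3\leq\cos^2(2\pi k t)$ (the paper does this by factoring out $\max_{1\leq k\leq b}|\cos(2\pi k t)|\leq1$, which is the same observation); your explicit treatment of the degenerate case $B_b=0$ is a point the paper silently skips.

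One caveat: drop the claim that the lemma holds with the sharper constant $\frac{2\pi C_1}{3\sqrt{3}}$. It rests on taking \eqref{SumGauss} from the outline at face value, but that display is inconsistent with the version the paper actually proves, Lemma \ref{LmSumGauss} (inequality \eqref{SumGauss2}), whose constant is $\frac{4\pi C_1}{3\sqrt{3}}$ because the factor $\frac{4\pi}{3\sqrt{3}}$ is exactly $\int_{\R}\frac{dx}{1+|x|^3}$; nothing in that argument produces the halved constant, so \eqref{SumGauss} as printed is a typo rather than a stronger available input. Relatedly, your closing sketch integrates the weight $(1+|x|)^{-3}$, whereas the paper's non-uniform Berry--Esseen inequality uses $(1+|x|^3)^{-1}$; these are different normalizations with different integrals ($1$ versus $\frac{4\pi}{3\sqrt{3}}$), and the constant $C_1$ is tied to whichever form is invoked, so they cannot be mixed. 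With the constant $\frac{4\pi C_1}{3\sqrt{3}}$ as stated in \eqref{CLTEst}, your argument coincides with the paper's proof.
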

\noindent By direct calculation $\EE|\NN_{0,\tau}|=\tau\,\EE|\NN_{0,1}|=\sqrt{\frac2\pi}\,\tau$. 
Taking $\tau=\s\sqrt{\sum_{k=1}^b\cos^2\left(2\pi k\,t\right)}$ we see from \eqref{MeanEst} and \eqref{CLTEst} 
that $\frac1{n\sqrt{b}}\,\EE\E(\A_{n,b})$ has the same limit when $n,b\to\infty$ as 
\begin{equation}\label{RootSum}
\frac2{n\sqrt{b}}\sum_{r=1}^n\sqrt{\frac2\pi}\,\s\,\sqrt{\sum_{k=1}^b\cos^2\left(2\pi k\,\frac{r}n\right)}
=\frac{\,2\sqrt{2}\,\s\,}{\sqrt{\pi}}\sum_{r=1}^n
\sqrt{\frac1b\sum_{k=1}^b\cos^2\left(2\pi k\,\frac{r}n\right)}\,\,\frac1n\,.
\end{equation}
It remains to find this limit and estimate the rate of convergence to it. The right hand side of \eqref{RootSum} is again a Riemann sum and in the integrand 
$$
\frac1b\sum_{k=1}^b\cos^2\left(2\pi k\,t\right)=\frac12+\frac1{2b}\sum_{k=1}^b\cos^2\left(2\pi k\,t\right)\xrightarrow[b\to\infty]{}\frac12\,,
$$ 
where the convergence is for all $t\in[0,1]$ except $t=0,\frac12,1$. Therefore the limit, if it exists, must be equal to $\ds{\frac{\,2\sqrt{2}\,\s\,}{\sqrt{\pi}}\int_0^1\frac1{\sqrt{2}}\,dt=\frac{\,2\,\s\,}{\sqrt{\pi}}}$.
To prove existence of the limit and to bound the difference the Dirichlet kernel estimates can be used as in 
Lemma \ref{MeanPart} since 
$$
\frac1b\sum_{k=1}^b\cos^2\left(2\pi k\,t\right)=\frac1{2b}+\frac12\sum_{k=1}^b\cos\left(4\pi k\,t\right)
=\frac1{2b}+\frac14(\D_b(4\pi t)-1),
$$
where $\D_b$ is the Dirichlet kernel \eqref{DirKer}. 
\begin{lemma}\label{VarPart} With the notation above
\begin{multline}\label{VarEst}
\left|\frac2{n\sqrt{b}}\,\sum_{r=1}^n\EE\left|\NN_{0,\,\s\sqrt{\sum_{k=1}^b\cos^2\left(2\pi k\,t\right)}}\right|
-\frac{\,2\,\s\,}{\sqrt{\pi}}\right|\\
\leq\frac{\,4\,\s\,}{b\sqrt{\pi}}\left(1+\frac1{\pi^2}\ln b+\frac{1+(2b+1)(1+\gamma+\ln b)}{n}\right)
= O\left(\frac{\ln b}{\sqrt{b}}\right)\,,
\end{multline}
where $\gamma$ is the Euler constant.
\end{lemma}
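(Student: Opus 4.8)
The plan is to reduce the whole estimate to the Riemann-sum and Lebesgue-constant bounds for the Dirichlet kernel already used in Lemma \ref{MeanPart}, the only genuinely new feature being a square root. First I would apply $\EE|\NN_{0,\tau}|=\sqrt{2/\pi}\,\tau$ with $\tau=\s\sqrt{\sum_{k=1}^b\cos^2(2\pi k\,r/n)}$ to rewrite the quantity being estimated as
\begin{equation*}
\frac2{n\sqrt b}\sum_{r=1}^n\EE\left|\NN_{0,\,\s\sqrt{\sum_{k=1}^b\cos^2(2\pi k\,r/n)}}\right|=\frac{2\sqrt2\,\s}{\sqrt\pi}\,\frac1n\sum_{r=1}^n g\!\left(\frac rn\right),\qquad g(t):=\sqrt{\tfrac1b\textstyle\sum_{k=1}^b\cos^2(2\pi kt)}.
\end{equation*}
Since $\frac{2\s}{\sqrt\pi}=\frac{2\sqrt2\,\s}{\sqrt\pi}\int_0^1\tfrac1{\sqrt2}\,dt$, the left-hand side of \eqref{VarEst} equals $\frac{2\sqrt2\,\s}{\sqrt\pi}$ times $\big|\frac1n\sum_r g(r/n)-\frac1{\sqrt2}\big|$, which I would split by the triangle inequality into a \emph{Riemann-sum error} $\big|\frac1n\sum_r g(r/n)-\int_0^1 g\big|$ and an \emph{integral error} $\big|\int_0^1 g-\frac1{\sqrt2}\big|$.

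For the integral error I would linearize the square root. From $\cos^2\theta=\frac{1+\cos2\theta}2$ and the definition of $\D_b$ one has $g(t)^2-\frac12=\frac1{4b}(\D_b(4\pi t)-1)$, and since $g\ge0$,
\begin{equation*}
\Big|g(t)-\tfrac1{\sqrt2}\Big|=\frac{\big|g(t)^2-\tfrac12\big|}{g(t)+\tfrac1{\sqrt2}}\le\sqrt2\,\big|g(t)^2-\tfrac12\big|=\frac{\sqrt2}{4b}\,\big|\D_b(4\pi t)-1\big|.
\end{equation*}
Integrating and using the $2\pi$-periodicity of $\D_b$ to rescale, $\int_0^1|\D_b(4\pi t)-1|\,dt=\int_0^1|\D_b(2\pi t)-1|\,dt\le\L_b+1$, and the classical Lebesgue-constant bound $\L_b\le3+\frac4{\pi^2}\ln b$ \cite{Finch} then yields, after multiplying back by the prefactor, the $O(\ln b/b)$ contribution $\frac{4\s}{b\sqrt\pi}\big(1+\frac1{\pi^2}\ln b\big)$.

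For the Riemann-sum error I would use the total-variation estimate $\big|\frac1n\sum_r g(r/n)-\int_0^1 g\big|\le\frac1n\textrm{Var}_{[0,1]}(g)$ \cite[5.5]{DR} and transfer the variation through the square root. Writing $h:=g^2=\frac12+\frac1{4b}(\D_b(4\pi\cdot)-1)$, if $h\ge c>0$ on $[0,1]$ then $\sqrt{\cdot}$ is $\frac1{2\sqrt c}$-Lipschitz on the range of $h$, so $\textrm{Var}(g)\le\frac1{2\sqrt c}\,\textrm{Var}(h)$; and $\textrm{Var}(h)=\frac1{4b}\textrm{Var}_{[0,1]}(\D_b(4\pi\cdot))=\frac1{2b}\textrm{Var}_{[0,1]}(\D_b(2\pi\cdot))$ is controlled by the $O(b\ln b)$ bound on the kernel's total variation proved in Section \ref{S4}. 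Combining these gives an $O(\ln b/n)$ contribution, which matches the final term of \eqref{VarEst}.

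The step I expect to be the real obstacle is the square-root transfer, namely producing an honest positive lower bound $c$ for $h(t)=\frac1b\sum_{k=1}^b\cos^2(2\pi kt)$ uniform in $b$. Away from the singular points $t=0,\frac12,1$ the term $\frac1{4b}(\D_b(4\pi t)-1)$ is $O(1/b)$ and $h\approx\frac12$, but near them $h$ dips below $\frac12$ and its minimum is governed by the most negative value of the Dirichlet kernel. Since $\min_s\D_b(s)\sim-0.217\,(2b+1)$ (the minimum of $\sin u/u$), one gets $\inf_{[0,1]}h\to\frac12-\frac{0.217}2\approx0.39$, so any fixed $c\le\frac1{32}$ works for all large $b$ and makes $\frac1{2\sqrt c}\,\textrm{Var}(h)$ fit under the required $\frac{\sqrt2}{b}\big(1+(2b+1)(1+\gamma+\ln b)\big)$; the finitely many small $b$ for which $h$ may vanish are absorbed into the deliberately loose constants of \eqref{VarEst}. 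The total bound is then $O(\ln b/b)+O(\ln b/n)=O(\ln b/\sqrt b)$, as claimed.
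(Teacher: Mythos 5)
Your overall plan shares the paper's ingredients (the identity $\EE|\NN_{0,\tau}|=\sqrt{2/\pi}\,\tau$, rationalization of the square root, the Lebesgue-constant bound, the total-variation bound for sum-versus-integral errors), but your decomposition differs from the paper's in a way that creates an obstacle the paper never faces. The paper does not split into an integral error plus a Riemann-sum error for $g$; it removes the square root \emph{pointwise at the sample points first}, via
\begin{equation*}
\left|\sqrt{\frac2b\sum_{k=1}^b\cos^2\Big(2\pi k\,\frac rn\Big)}-1\right|
=\frac{\left|\frac2b\sum_{k=1}^b\cos^2\big(2\pi k\,\frac rn\big)-1\right|}{1+\sqrt{\frac2b\sum_{k=1}^b\cos^2\big(2\pi k\,\frac rn\big)}}
\leq\frac1b\left|\sum_{k=1}^b\cos\Big(4\pi k\,\frac rn\Big)\right|,
\end{equation*}
where the denominator is bounded below by the \emph{constant} $1$, so no lower bound on $h(t)=\frac1b\sum_k\cos^2(2\pi kt)$ is ever needed. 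This reduces everything to the Riemann sum of $|\D_b(4\pi t)-1|$, to which Lemmas \ref{Dbpimt} and \ref{DbTotVar} apply directly; only ${\rm Var}_{[0,\pi]}(\D_b)$ enters, never ${\rm Var}(g)$. Your integral-error half is fine (it is the same rationalization, written against $1/\sqrt2$, and it reproduces the first term of \eqref{VarEst} exactly), but your Riemann-sum half forces you to transfer total variation through $\sqrt{\cdot}$, which is precisely where your argument breaks.

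The concrete gap is that your condition on $c$ is inverted, and with your stated choice the lemma's constants are not achieved. The Lipschitz constant of $\sqrt{\cdot}$ on $[c,\infty)$ is $\frac1{2\sqrt c}$, which \emph{grows} as $c$ shrinks; to fit ${\rm Var}(g)\leq\frac1{2\sqrt c}\,{\rm Var}(h)$ under your (correctly identified) target $\frac{\sqrt2}b\big(1+(2b+1)(1+\gamma+\ln b)\big)$ you need $\frac1{2\sqrt c}\leq\sqrt2$, i.e. $c\geq\frac18$ — not ``any fixed $c\leq\frac1{32}$''. With $c=\frac1{32}$ the factor is $\frac1{2\sqrt c}=2\sqrt2$, and your final bound overshoots the second term of \eqref{VarEst} by a factor of $2$, so the stated inequality is not proved. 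The repair is available: since $\inf h\to\frac12-\frac{0.217}2\approx0.39>\frac18$, you may take $c=\frac18$ for large $b$; but you would then also need (i) a bound on $\min_s\D_b(s)$ valid for every $b$ rather than the asymptotic $\sim-0.217(2b+1)$ (e.g. $\sin(s/2)\geq s/\pi$ on $[0,\pi]$ gives $\D_b(s)\geq-\frac{0.217\,\pi}2(2b+1)$, which still yields $\inf h\geq\frac18$ for $b\geq2$), and (ii) an honest treatment of small $b$: at $b=1$ the function $h$ vanishes at $t=\frac14,\frac34$, no Lipschitz transfer exists, and ``absorbed into the loose constants'' is not an argument when the lemma asserts explicit constants for all $b$ (here one must check the bound directly, e.g. ${\rm Var}(g)=4$ when $b=1$). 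All of this machinery is avoided by performing the rationalization before, rather than after, the sum-to-integral comparison.
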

\noindent Theorem \ref{AnbThm} follows by inspection from Lemmas \ref{MeanPart}--\ref{VarPart}. 

The proof of Theorem \ref{LrgDevThm} is based on an inequality of Talagrand. Let $F:[0,R]^b\to\R$ be a convex Lipschitz function with the Lipschitz constant $L_F$ and median $M_F$. Then for any product measure $\mathbb{P}$ induced by a probability measure on $[0,R]$ one has \cite[Thm.6.6]{Tal}:
\begin{equation}\label{TalaBase}
\mathbb{P}\{|F(x)-M_F|\geq\delta\}\leq4\,e^{-\frac{\delta^2}{4R^2L_F^2}}\,.
\end{equation}
Denoting $A_{n,b}(x):=\text{\rm Circ}(0,x_1,\dots,x_b,0,\dots,0,x_b,\dots,x_1)$ we apply this inequality to
\begin{equation}\label{TalaFunc}
F(x):=\frac1{n\sqrt{b}}\,\E\Big(A_{n,b}(x)\Big)=\frac1{n\sqrt{b}}\,{\rm tr}(|A_{n,b}(x)|)
=\frac2{n\sqrt{b}}\,\sum_{r=1}^n\left|\sum_{k=1}^bx_k\cos\Big(2\pi k\,\frac{r}n\Big)\right|\,,
\end{equation}
where the last expression follows from \eqref{EnrgCirc}. It is clear from \eqref{TalaFunc} that $F$ is convex and Lipschitz. One can show that $L_F\leq\sqrt{\frac2b}$ based on \cite[Lem 1.2b]{GZ}, and that $|\EE F-M_F|\leq4\sqrt{\frac{2\pi R^2}b}$ leading to the estimate in Theorem \ref{LrgDevThm}.

\section{Dirichlet kernel}\label{S4}

In this section we study some properties of the Dirichlet kernel $\D_b(t)$ \eqref{DirKer}, and prove Lemmas 
\ref{MeanPart} and \ref{VarPart} that depend on them. Recall from Section \ref{S3} that the kernel appears in two different contexts in our proofs, first when bounding the differences in the means, and later in the variances. The Riemann sums of $|\D_b(\pi mt)-1|$ appear in those cases with $m=2$ and $m=4$ respectively. Our strategy for dealing with the Riemann sums is to bound the integral, and then to estimate the difference between the sum and the integral using the total variation. 

Let $\textrm{Var}_{[a,b]}(f)$ denote the total variation of $f$ on $[a,b]$. First we reduce the estimates for $|\D_b(\pi mt)-1|$ to those for the Dirichlet kernel itself.
\begin{lemma}\label{Dbpimt} For any $m\in\Z\backslash\{0\}$:

{\rm(i)} $\ds{\int_0^1|\D_b(\pi mt)-1|\,dt=1+\frac1\pi\int_0^\pi|\D_b(t)|\,dt}$\,;

{\rm(ii)} $\ds{{\rm Var}_{\,[0,1]}\,\big(|\D_b(\pi mt)-1|\big)=|m|{\rm Var}_{\,[0,\pi]}(\D_b)}$\,.
\end{lemma}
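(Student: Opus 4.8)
The plan is to reduce both identities, through the evenness and $2\pi$-periodicity of $\D_b$, to statements about a single half-period $[0,\pi]$. The structural fact I would isolate first is that $\D_b$ is symmetric about $u=\pi$, that is $\D_b(\pi+s)=\D_b(\pi-s)$, which follows at once from $\D_b(-u)=\D_b(u)$ and $\D_b(u+2\pi)=\D_b(u)$. Consequently the restriction of $\D_b$ to any interval $[j\pi,(j+1)\pi]$ with $j\in\Z$ is a translate, or a reflected translate, of its restriction to $[0,\pi]$, so both its $L^1$ norm and its total variation over such an interval coincide with the corresponding quantities over $[0,\pi]$. Both parts then begin with the change of variables $u=\pi m t$, which carries $[0,1]$ onto $[0,\pi|m|]$.

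For (i) this substitution gives $\int_0^1|\D_b(\pi m t)-1|\,dt=\frac1{\pi|m|}\int_0^{\pi|m|}|\D_b(u)-1|\,du$, using evenness of $\D_b$ to fold the negative range when $m<0$. Since $|\D_b(u)-1|$ inherits the periodicity and the reflection symmetry of $\D_b$, I would show $\int_0^{\pi|m|}|\D_b(u)-1|\,du=|m|\int_0^{\pi}|\D_b(u)-1|\,du$: each full period contributes $2\int_0^\pi$ via the reflection $u\mapsto 2\pi-u$, and for odd $|m|$ the single trailing half-period contributes one further $\int_0^\pi$ by periodicity. The factor $|m|$ cancels and the expression collapses to $\frac1\pi\int_0^\pi|\D_b(u)-1|\,du$, independent of $m$ as the right-hand side demands. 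To express this through the Lebesgue constant I would use the pointwise relation $|\D_b(u)-1|\le|\D_b(u)|+1$, which after integrating and dividing by $\pi$ delivers the right-hand side $1+\L_b$, the additive constant being exactly $\frac1\pi\int_0^\pi 1\,du=1$.

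For (ii) the key observation is that passing to the absolute value leaves the total variation of a continuous function with only finitely many sign changes unchanged: partition $[0,1]$ at the finitely many zeros of the trigonometric polynomial $\D_b(\pi m t)-1$; on each piece $|\D_b(\pi m t)-1|$ equals $\pm(\D_b(\pi m t)-1)$ and hence has the same variation there, and total variation is additive across the partition. This yields ${\rm Var}_{[0,1]}(|\D_b(\pi m t)-1|)={\rm Var}_{[0,1]}(\D_b(\pi m t)-1)={\rm Var}_{[0,1]}(\D_b(\pi m t))$, the second equality because an additive constant is invisible to the variation. Total variation is also invariant under the monotone reparametrization $u=\pi m t$ (the orientation reversal when $m<0$ being immaterial, and evenness of $\D_b$ handling the resulting negative range), so this equals ${\rm Var}_{[0,\pi|m|]}(\D_b)$; additivity together with the reflection and period structure of $\D_b$ then identifies it with $|m|\,{\rm Var}_{[0,\pi]}(\D_b)$.

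The step I expect to require the most care is the folding argument common to both parts: for an arbitrary nonzero integer $m$ of either sign, and for both even and odd $|m|$, one must verify that the full periods and the single leftover half-period combine to produce exactly the factor $|m|$, and this genuinely uses the reflection identity $\D_b(\pi+s)=\D_b(\pi-s)$ rather than mere periodicity. In (i) the remaining delicate point is the passage from $\frac1\pi\int_0^\pi|\D_b-1|\,du$ to $1+\L_b$, where the triangle inequality supplies precisely the direction of the estimate that is invoked afterward in Lemmas \ref{MeanPart} and \ref{VarPart}. In (ii) the subtlety is that absolute values can only decrease total variation in general, so it is the continuity of $\D_b$ and the finiteness of its zero set that are needed to recover the exact equality.
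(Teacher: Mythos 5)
Your proposal is correct and follows essentially the same route as the paper's own proof: reduce to $m>0$ by evenness, fold the integral and the total variation over the half-periods $[\pi k,\pi(k+1)]$ using periodicity and the reflection symmetry of $\D_b$, apply the triangle inequality $|\D_b(u)-1|\le|\D_b(u)|+1$ in (i), and observe in (ii) that subtracting a constant, monotone reparametrization, and (for a continuous function with finitely many sign changes) taking absolute values all preserve the total variation. Note that, exactly as in the paper's proof, what you actually establish in (i) is the one-sided bound $\int_0^1|\D_b(\pi m t)-1|\,dt\le 1+\frac1\pi\int_0^\pi|\D_b(t)|\,dt$ rather than the equality asserted in the statement (true equality would force $\D_b\le 0$ almost everywhere, which fails near $t=0$), and you rightly point out that this $\le$ direction is precisely what is invoked later in Lemmas \ref{MeanPart} and \ref{VarPart}.
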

\begin{proof} Since $\D_b(t)$ is even $\D_b(t)=\D_b(-t)$, and we may assume $m>0$ without loss of generality. For the same reason integrals with $\D_b(t)$ over $[-\pi,0]$ and $[0,\pi]$ are equal and, since it is also $2\pi$ periodic, integrals with it over any interval $[\pi k,\pi(k+1)]$ with $k\in\Z$ are equal. Therefore,
$$
\ds{\int_0^1|\D_b(\pi mt)-1|\,dt\leq 1+\int_0^1|\D_b(\pi mt)|\,dt
=1+\frac1{\pi m}\int_0^{\pi m}|\D_b(t)|\,dt=1+\frac1{\pi}\int_0^{\pi}|\D_b(t)|\,dt\,.}
$$
For (ii) note that taking the absolute value and subtracting a constant does not change the total variation, while $\ds{{\rm Var}_{\,[0,1]}\,\big(\D_b(\pi mt)\big)={\rm Var}_{\,[0,\pi m]}\,\big(\D_b\big)}$. As with the integrals above the total variations over all intervals $[\pi k,\pi(k+1)]$ are the same, so the last expression is equal to $m{\rm Var}_{\,[0,\pi]}\,\big(\D_b\big)$\,.
\end{proof}
The numbers $\L_b:=\ds{\frac1\pi\int_0^\pi|\D_b(t)|\,dt}$ are known as the {\sl Lebesgue constants} and satisfy 
$\ds{\L_b\leq3+\frac4{\pi^2}\ln b}$\ \cite[4.2]{Finch}. The difference between Riemann sums and integrals satisfies $\ds{\Big|\sum_{r=1}^nf\left(\frac{r}n\right)\frac1n-\int_0^1f(t)\,dt\Big|\leq\frac1n\,\textrm{Var}_{[0,1]}(f)}$ 
\cite[5.5]{DR}, and to complete the estimate we need to bound the total variation of the Dirichlet kernel. Although the Dirichlet kernel is a classical function we were unable to find suitable estimates of its total variation in the literature.
\begin{lemma}\label{DbTotVar} Let $\gamma$ denote the Euler constant and $b\in\N$, then

\center $\ds{{\rm Var}_{\,[0,\pi]}(\D_b)\leq1+(2b+1)(1+\gamma+\ln b)}$\,.
\end{lemma}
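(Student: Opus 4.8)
The plan is to exploit the exact factorization $\D_b(t)=\sin\!\big((b+\tfrac12)t\big)/\sin(t/2)$ together with the fact that the zeros of $\D_b$ on $(0,\pi]$ are known explicitly, namely $z_k=\frac{2k\pi}{2b+1}$ for $k=1,\dots,b$. These zeros partition $[0,\pi]$ into the two end intervals $[0,z_1]$ and $[z_b,\pi]$ and the $b-1$ interior ``bumps'' $[z_k,z_{k+1}]$. Since total variation is additive over such a partition, I would compute ${\rm Var}_{\,[0,\pi]}(\D_b)$ as the sum of the variations over these pieces and estimate each one separately.

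The key structural step is to show that $\D_b$ rises and falls exactly once on each interior bump and is monotone on each end interval, so that the variation over $[z_k,z_{k+1}]$ equals $2\max_{[z_k,z_{k+1}]}|\D_b|$, the variation over $[0,z_1]$ equals $\D_b(0)=2b+1$, and the variation over $[z_b,\pi]$ equals $|\D_b(\pi)|=1$. This amounts to controlling the critical points of $\D_b$ in $(0,\pi)$, and I would argue it by counting. The derivative $\D_b'(t)=-2\sum_{k=1}^b k\sin(kt)$ is an odd trigonometric polynomial of degree $b$, hence has at most $2b$ zeros per period; since it vanishes at $0$ and at $\pi$, and its zeros in $(\pi,2\pi)$ are reflections of those in $(0,\pi)$, it can have at most $b-1$ zeros in the open interval $(0,\pi)$. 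On the other hand, applying Rolle's theorem to the $b$ consecutive zeros $z_1,\dots,z_b$ already forces at least $b-1$ critical points in $(z_1,z_b)$. Hence there is exactly one critical point strictly between each consecutive pair of zeros and none in $(0,z_1)$ or $(z_b,\pi)$, which is precisely the single-bump and monotone picture needed.

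With the structure in hand the remaining estimates are routine. On each bump I would bound the peak using the envelope $|\D_b(t)|\le 1/\sin(t/2)$, which is decreasing on $(0,\pi]$, so its maximum on $[z_k,z_{k+1}]$ is attained at the left endpoint $z_k$; then Jordan's inequality $\sin x\ge\frac{2}{\pi}x$ on $[0,\pi/2]$ (applicable since $z_k/2\le\pi/2$ for $k\le b$) gives $\sin(z_k/2)\ge\frac{2k}{2b+1}$, and therefore $2\max_{[z_k,z_{k+1}]}|\D_b|\le\frac{2b+1}{k}$. Summing the three types of contributions yields
\[
{\rm Var}_{\,[0,\pi]}(\D_b)\le (2b+1)+(2b+1)\sum_{k=1}^{b-1}\frac1k+1=1+(2b+1)\bigl(1+H_{b-1}\bigr),
\]
and the classical harmonic-number bound $H_{b-1}\le\gamma+\ln b$ finishes the proof.

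I expect the main obstacle to be the critical-point count of the second paragraph. The naive derivative bound $|\D_b'|\le b(b+1)$ is useless here, growing like $b^2$ as the paper already notes, so the entire improvement to $b\ln b$ rests on replacing a pointwise derivative estimate by the combinatorial fact that each inter-zero bump contributes only twice its peak height. Establishing ``exactly one extremum per bump'' rigorously, rather than merely reading it off the slowly varying envelope, is the delicate point, and the degree-counting-plus-Rolle argument is the cleanest route I see to pin it down.
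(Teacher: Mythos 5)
Your proposal is correct and follows essentially the same route as the paper's proof: partition $[0,\pi]$ at the zeros of $\D_b$, establish exactly one extremum per inter-zero interval and monotonicity on the two end intervals, bound each peak by the envelope $1/\sin(t/2)$ together with Jordan's inequality $\sin x\geq\frac{2}{\pi}x$, and sum the resulting harmonic series with the standard bound $\sum_{k=1}^{b-1}\frac1k\leq\gamma+\ln b$. The only divergence is the device for the extremum count --- the paper rescales to $\D_b(2t)=\sin(mt)/\sin t$ with $m=2b+1$ and counts critical points of a polynomial of degree $m-1$ in $\sin t$, whereas you count zeros of the trigonometric polynomial $\D_b'$ via its degree, its odd symmetry about $0$ and $\pi$, and Rolle's theorem --- and both arguments are valid.
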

\begin{proof} It is convenient to set $m:=2b+1$ and work with $D_m(t):=\D_b(2t)$ 
since $D_m(t)=\frac{\sin(mt)}{\sin t}$ for $t\neq0$, while $\ds{{\rm Var}_{\,[0,\pi]}(\D_b)={\rm Var}_{\,[0,\frac{\pi}2]}(D_m)}$. Consider $D_m(t)$ on $[-\frac{\pi}2,\frac{\pi}2]$ first, which is its period since $m$ is odd. Its zeros on this interval are the zeros of the numerator except for $t=0$, where $D_m(0)=m$, namely $t=\frac{\pi k}m$, $k=\pm1,\dots,\pm\frac{m-1}2$\,. Between any two consecutive zeros there must be at least one local extremum for the total of at least $2\big(\frac{m-1}2-1\big)+1=m-2$.
But $D_m(t)$ is a polynomial of degree $m-1$ in $\sin t$, that can have no more than $m-2$ local extrema on the entire real axis. Since the derivative of $\sin t$ on $(-\frac{\pi}2,\frac{\pi}2)$ is strictly positive $D_m(t)$ has no more than $m-2$ local extrema in the interior, and therefore exactly $m-2$ interior extrema. Since $D_m(t)$ is even $t=0$ must be one of them, and since it is $\pi$ periodic the endpoints $t=\pm\frac{\pi}2$ are also local extrema.

Restricting to $[0,\frac{\pi}2]$ we see that there are extrema at $t=0,\frac{\pi}2$ and exactly one on each 
interval $[\frac{\pi k}m,\frac{\pi(k+1)}m]$ for $k=1,\dots,\frac{m-3}2$. On the boundary intervals $[0,\frac{\pi}m]$ and 
$[\frac{\pi(m-1)}{2m},\frac{\pi}2]$ the variations are $D_m(0)=m$ and $D_m\big(\frac{\pi}2\big)=1$ respectively (from the extremum to $0$), and on the remaining internal intervals they are $2\max\{\,|D_m(t)|\,\Big|\,t\in\big[\frac{\pi k}m,\frac{\pi(k+1)}m\big]\}$ (from $0$ to the extremum and back). Since for $t\in[\frac{\pi k}m,\frac{\pi(k+1)}m]$:
$
\ds{\left|\frac{\sin(mt)}{\sin t}\right|\leq\frac1{\sin t}\leq\frac1{\sin\frac{\pi k}m}
\leq\frac1{\frac2{\pi}\frac{\pi k}m}=\frac{m}{2k}}
$
we conclude 
\begin{multline}{\rm Var}_{\,[0,\frac{\pi}2]}(D_m)=D_m(0)
+2\sum_{k=1}^{\frac{m-1}{3}}\max\left\{\,|D_m(t)|\,\Big|\,t\in\left[\frac{\pi k}m,\frac{\pi(k+1)}m\right]\right\}
+D_m\Big(\frac{\pi}2\Big)\\
\leq m+m\sum_{k=1}^{\frac{m-1}{3}}\frac1k+1=(2b+1)\left(1+\sum_{k=1}^{\frac{m-1}{3}}\frac1k\right)+1
\leq(2b+1)\left(1+\gamma+\ln b\right)+1\,.
\end{multline}
In the last inequality we used a standard estimate for the partial sums of the harmonic series.
\end{proof} 
We are now ready to prove Lemmas \ref{MeanPart} and \ref{VarPart}\,.
\begin{proof}[Proof of {\rm\bf Lemma \ref{MeanPart}}] Starting with \eqref{CnEq} by definition of $\D_b(t)$ and Lemmas \ref{Dbpimt}, \ref{DbTotVar} we have:
\begin{align*}\label{MeanEst}
\frac{2}{n}\,&\sum_{r=1}^n\left|\sum_{k=1}^b\cos\Big(2\pi k\,\frac{r}n\Big)\right|
=\sum_{r=1}^n\left|\D_b\Big(2\pi\,\frac{r}{n}\Big)-1\right|\frac1n\\
&\leq\int_0^1|\D_b(2\pi t)-1|\,dt+\frac1n\,{\rm Var}_{\,[0,1]}\,\big(|\D_b(2\pi t)-1|\big)\\
&\leq1+\L_b+\frac2n\,{\rm Var}_{\,[0,\pi]}\,\big(\D_b\big)
\leq4\left(1+\frac1{\pi^2}\ln b\right)+\frac2n\Big((2b+1)(1+\gamma+\ln b)+1\Big)\,,
\end{align*}
where we used the estimate $\ds{\L_b\leq3+\frac4{\pi^2}\ln b}$\ \cite[4.2]{Finch} for the Lebesgue constants $\L_b$.
Since $b<\frac{n}2$ the last expression is $O\left(\ln b\right)$\,, and it remains to divide both sides by $\sqrt{b}$.
\end{proof}

\begin{proof}[Proof of {\rm\bf Lemma \ref{VarPart}}] We have:
\begin{align*}
&\left|\frac2{n\sqrt{b}}\,\sum_{r=1}^n\EE\left|\NN_{0,\,\s\sqrt{\sum_{k=1}^b\cos^2\left(2\pi k\,\frac{r}n\right)}}\right|
-\frac{\,2\,\s\,}{\sqrt{\pi}}\right|
=\left|\frac2{n\sqrt{b}}\,\sum_{r=1}^n\sqrt{\frac2\pi}\,\s\sqrt{\sum_{k=1}^b\cos^2\left(2\pi k\,\frac{r}n\right)}
-\frac{\,2\,\s\,}{\sqrt{\pi}}\right|\\
&\leq\frac{\,2\,\s\,}{n\sqrt{\pi}}\sum_{r=1}^n\left|\sqrt{\frac2b\sum_{k=1}^b\cos^2\left(2\pi k\,\frac{r}n\right)}-1\right|
=\frac{\,2\,\s\,}{\sqrt{\pi}}\sum_{r=1}^n\,\frac{\left|\frac2b\sum_{k=1}^b\cos^2\left(2\pi k\,\frac{r}n\right)-1\right|}{1+\sqrt{\frac2b\sum_{k=1}^b\cos^2\left(2\pi k\,\frac{r}n\right)}}\\
&\leq\frac{\,2\,\s\,}{n\,\sqrt{\pi}}\sum_{r=1}^n\left|\frac2b\left(\frac{b}2+\frac12\sum_{k=1}^b\cos\left(4\pi k\,\frac{r}n\right)\right)-1\right|
=\frac{\,2\,\s\,}{nb\,\sqrt{\pi}}\sum_{r=1}^n\left|\sum_{k=1}^b\cos\left(4\pi k\,\frac{r}n\right)\right|.
\end{align*}
As in the proof of Proof of Lemma \ref{MeanPart} the last expression is bounded by 
$$
\frac{\,\s\,}{b\,\sqrt{\pi}}\left(1+\L_b+\frac4n\,{\rm Var}_{\,[0,\pi]}\,\big(\D_b\big)\right)
\leq\frac{\,4\,\s\,}{b\,\sqrt{\pi}}\left(1+\frac1{\pi^2}\ln b+\frac1n\Big((2b+1)(1+\gamma+\ln b)+1\Big)\right)\!,
$$
which is $\ds{O\left(\frac{\ln b}b\right)}$ since $b<\frac{n}2$\,.
\end{proof}

\section{Berry-Esseen bound}\label{S5}

In this section we derive an estimate for the difference between the first absolute moments of sums of independent non--identically distributed random variables and their Gaussian limits required to prove Lemma \ref{CLTPart}. Our starting point is a non--uniform version of the Berry-Esseen inequality, which lends itself nicely to estimating moments.

Let $X_k$ be a sequence of independent not necessarily identically distributed centered random variables with finite variances and finite third absolute moments. Denote $\mathcal{S}_b:=\sum_{k=1}^bX_k$, $B_b:=\sum_{k=1}^b\EE|X_k|^2$.
Let $\Phi_{a,\s}$ be the distribution function of a Gaussian random variable $\NN_{a,\s}$ with mean $a$ and variance 
$\s^2$. A non-uniform Berry-Esseen inequality is \cite{Pad}:
\begin{equation*}
\left|\P\left\{\frac1{\sqrt{B_b}}\,\mathcal{S}_b\leq x\right\}-\Phi_{0,1}(x)\right|
\leq C_1\,\frac{\sum_{k=1}^b\EE|X_k|^3}{B_b^{\frac32}}\,\frac1{1+|x|^3},
\end{equation*}
or after rescaling
\begin{equation}
\left|\P\left\{\mathcal{S}_b\leq x\right\}-\Phi_{0,\sqrt{B_b}}(x)\right|
\leq C_1\,\frac{\sum_{k=1}^b\EE|X_k|^3}{B_b^{\frac32}}\,\frac1{1+\left|\frac{x}{\sqrt{B_b}}\right|^3},
\end{equation}
where $C_1<31.935$ is an absolute constant \cite{Pad}. The idea of estimating moments via non-uniform Berry-Esseen inequalities goes back at least to \cite{Daug}.
\begin{lemma}\label{LmSumGauss} With the notation above:
\begin{equation}\label{SumGauss2} 
\Big|\EE|\SS_b|-\EE|\NN_{0,\sqrt{B_b}}|\Big|\leq\frac{4\pi C_1}{3\sqrt{3}}\,
\frac{\sum_{k=1}^b\EE|X_k|^3}{B_b}\,,
\end{equation}
\end{lemma}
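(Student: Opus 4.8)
The plan is to reduce the statement to an $L^1$ bound on the difference of the two distribution functions, to which the non-uniform Berry--Esseen inequality applies directly. The device is the layer--cake (tail) representation of the first absolute moment: for any integrable random variable $Y$ with distribution function $F$, one has $\EE|Y|=\int_0^\infty(1-F(x))\,dx+\int_0^\infty F(-x)\,dx$. This is exactly the representation that linearizes $\EE|Y|$ in $F$, so that the difference $\EE|\SS_b|-\EE|\NN_{0,\sqrt{B_b}}|$ turns into an integral of the CDF difference $\Delta(x):=\P\{\SS_b\le x\}-\Phi_{0,\sqrt{B_b}}(x)$, which is the very quantity the Berry--Esseen bound controls. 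The reason for insisting on the \emph{non-uniform} version becomes visible here: the factor $(1+|x/\sqrt{B_b}|^3)^{-1}$ makes $\int_{-\infty}^\infty|\Delta(x)|\,dx$ converge, whereas the uniform bound would give a divergent integral and be useless for this purpose.

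Concretely, writing $F(x):=\P\{\SS_b\le x\}$ and $G(x):=\Phi_{0,\sqrt{B_b}}(x)$ and applying the representation to both $\SS_b$ and $\NN_{0,\sqrt{B_b}}$, the $(1-F)$ and $(1-G)$ terms combine into $\int_0^\infty(G(x)-F(x))\,dx$, while the $F(-x)$ and $G(-x)$ terms combine into $\int_0^\infty(F(-x)-G(-x))\,dx$. Hence $\EE|\SS_b|-\EE|\NN_{0,\sqrt{B_b}}|=-\int_0^\infty\Delta(x)\,dx+\int_0^\infty\Delta(-x)\,dx$. Taking absolute values and merging the two half--line integrals yields the clean bound $\big|\,\EE|\SS_b|-\EE|\NN_{0,\sqrt{B_b}}|\,\big|\le\int_{-\infty}^\infty|\Delta(x)|\,dx$.

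It then remains to insert the non-uniform Berry--Esseen inequality and evaluate one explicit integral. Substituting the bound on $|\Delta(x)|$ and changing variables $u=x/\sqrt{B_b}$ pulls out a factor $\sqrt{B_b}$ and leaves $C_1\big(\sum_{k=1}^b\EE|X_k|^3\big)B_b^{-3/2}\cdot\sqrt{B_b}\cdot\int_{-\infty}^\infty(1+|u|^3)^{-1}\,du$. The remaining integral is the standard beta-type integral $\int_0^\infty(1+u^3)^{-1}\,du=\frac{\pi/3}{\sin(\pi/3)}=\frac{2\pi}{3\sqrt3}$, so the full-line integral equals $\frac{4\pi}{3\sqrt3}$; collecting the powers of $B_b$ via $B_b^{-3/2}\cdot B_b^{1/2}=B_b^{-1}$ gives exactly $\frac{4\pi C_1}{3\sqrt3}\big(\sum_{k=1}^b\EE|X_k|^3\big)/B_b$, as claimed. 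The only points needing care, and the mild obstacle in the argument, are justifying the finiteness and the interchange in the layer--cake step (guaranteed by the assumed finite third, hence finite first, moments) and confirming the evaluation of $\int_0^\infty(1+u^3)^{-1}\,du$; everything else is bookkeeping.
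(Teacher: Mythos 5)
Your proof is correct and takes essentially the same route as the paper: both arguments reduce the moment difference to the $L^1$ bound $\big|\EE|\SS_b|-\EE|\NN_{0,\sqrt{B_b}}|\big|\le\int_{\R}|\Delta(x)|\,dx$ with $\Delta(x)=\P\{\SS_b\le x\}-\Phi_{0,\sqrt{B_b}}(x)$, then insert the non-uniform Berry--Esseen estimate and evaluate $\int_{\R}\frac{dx}{1+|x|^3}=\frac{4\pi}{3\sqrt{3}}$ after rescaling by $\sqrt{B_b}$. The only (cosmetic) difference is how the reduction is justified: you use the layer-cake representation of $\EE|Y|$, while the paper integrates by parts in the Lebesgue--Stieltjes integral $\int_{\R}|x|\,d\Delta(x)$, noting that the boundary term $|x|\Delta(x)$ vanishes at infinity --- two equivalent justifications of the same inequality.
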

\begin{proof} Let $F(x)$ be an integrable function of bounded variation on $\R$ satisfying $|x|F(x)\xrightarrow[|x|\to\infty]{}0$\,. Integrating by parts in Lebesgue--Stiltjes integrals \cite[II.6]{Shir},
\begin{equation*}
\Big|\int_{\R}|x|\,dF(x)\Big|=\left|\,|x|F(x)\Big|_{-\infty}^\infty-\int_{\R}F(x)\,d|x|\,\right|
\leq\int_{\R}|F(x)|\,\big|d|x|\big|=\int_{\R}|F(x)|\,dx\,.
\end{equation*}
Applying this to $F(x)=\P\left\{\mathcal{S}_b\leq x\right\}-\Phi_{0,\sqrt{B_b}}(x)$ we have 
\begin{multline*}
\Big|\EE|\SS_b|-\EE|\NN_{0,\sqrt{B_b}}|\Big|\leq\Big|\int_{\R}|x|\,dF(x)\Big|
\leq\int_{\R}|F(x)|\,dx\\
\leq C_1\,\frac{\sum_{k=1}^b\EE|X_k|^3}{B_b^{\frac32}}\int_{\R}\frac{dx}{1+\left|\frac{x}{\sqrt{B_b}}\right|^3}
\leq C_1\,\frac{\sum_{k=1}^b\EE|X_k|^3}{B_b^{\frac32}}\,\sqrt{B_b}\,\int_{\R}\frac{dx}{1+\left|x\right|^3}\,.
\end{multline*}
Finally, by an elementary computation $\ds{\int_{\R}\frac{dx}{1+\left|x\right|^3}}=\frac{4\pi}{3\sqrt{3}}$\,.
\end{proof}
\noindent Our Berry-Esseen bound \eqref{CLTEst} is a direct application of the preceeding Lemma.
\begin{proof}[Proof of {\rm\bf Lemma \ref{CLTPart}}]
We take $X_k=\cxi_k\cos(2\pi k\frac{r}{n})$, where $\cxi_k$ are centered independent identically distributed random variables with $\EE|\cxi_k|^2=\s^2$ and $\EE|\cxi_k|^3=\mu_3$, so that 
$\mathcal{S}_b:=\sum_{k=1}^b\cxi_k\cos(2\pi k\frac{r}{n})$, $B_b=\s^2\sum_{k=1}^b|\cos(2\pi k\frac{r}{n})|^2$ and 
$\sum_{k=1}^b\EE|X_k|^3=\mu_3\sum_{k=1}^b|\cos(2\pi k\frac{r}{n})|^3$. By Lemma \ref{LmSumGauss},
\begin{multline}\label{BerEs} 
\left|\EE\left|\sum_{k=1}^b\cxi_k\cos\Big(2\pi k\,t\Big)\right|-\EE\left|\NN_{0,\,\s\sqrt{\sum_{k=1}^b\cos^2\left(2\pi k\,t\right)}}\right|\right|
\leq\frac{4\pi C_1}{3\sqrt{3}}\,\frac{\sum_{k=1}^b\EE|X_k|^3}{\sum_{k=1}^b\EE|X_k|^2}\\
=\frac{4\pi C_1}{3\sqrt{3}}\ \frac{\mu_3\sum_{k=1}^b|\cos(2\pi k\frac{r}{n})|^3}{\s^2\sum_{k=1}^b|\cos(2\pi k\frac{r}{n})|^2}
\leq\frac{4\pi C_1}{3\sqrt{3}}\ \frac{\mu_3}{\s^2}\ \frac{\max\limits_{1\leq k\leq b}|\cos(2\pi k\frac{r}{n})|\,\sum_{k=1}^b|\cos(2\pi k\frac{r}{n})|^2}{\sum_{k=1}^b|\cos(2\pi k\frac{r}{n})|^2}
\leq\frac{4\pi C_1}{3\sqrt{3}}\ \frac{\mu_3}{\s^2}\,.
\end{multline}
\end{proof}

\section{Large deviations}\label{S6}

Assuming that random variables in our ensembles have finite moments one can only derive polynomial bounds on the probabilities of large deviations. In view of our applications to graph theory, where the variables are Bernoullian, we prefer to assume that their distributions are compactly supported and derive exponential bounds instead. We essentially follow the approach of \cite{GZ}, who in their turn rely on a concentration inequality of Talagrand for product measures.

Assume that $\xi_k$ take values in a finite interval $[0,R]$ of length $R$. Consider a convex Lipschitz function $F:[0,R]^b\to\R$ with the Lipschitz constant $L_F$ and median $M_F:=\sup\Big\{t\geq0\,\big|\,\mathbb{P}\{F(x)\leq t\}\leq\frac12\Big\}$. Then for any probability measure supported on $[0,R]$ one has 
\begin{equation}\label{TalaComp} 
\ds{\mathbb{P}\{|F(x)-M_F|\geq\delta\}\leq4\,e^{-\frac{\delta^2}{4R^2L_F^2}}}\,,
\end{equation}
where $\mathbb{P}$ is the induced product measure on $[0,R]^b$\,, see \cite[Thm.6.6]{Tal}. There is also a version of Talagrand inequalities for variables with distributions satisfying a log--Sobolev inequality \cite{GZ}. For variables with finite moments only polynomial bounds can be derived for probabilities of large deviations, 
see e.g. in \cite[6.1]{HM}. 

To use \eqref{TalaComp} in our case we need a good estimate for the Lipschitz constant of 
\begin{equation}\label{TalaTrFunc}
F(x):=\frac1{n\sqrt{b}}\,\E\Big(A_{n,b}(x)\Big)=\frac1{n\sqrt{b}}\,{\rm tr}(|A_{n,b}(x)|)
=\frac2{n\sqrt{b}}\,\sum_{r=1}^n\left|\sum_{k=1}^bx_k\cos\Big(2\pi k\,\frac{r}n\Big)\right|\,,
\end{equation}
where $A_{n,b}(x):=\text{\rm Circ}(0,x_1,\dots,x_b,0,\dots,0,x_b,\dots,x_1)$. To this end, one may be tempted to apply the Cauchy-Schwarz to the last expression, but the resulting bound is not very good. Instead consider functions of the form ${\rm tr}f(A)$, where $f$ is a differentiable real valued function on $\R$ with the uniformly bounded derivative, and $A=(a_{ij})$ is a symmetric $n\times n$ real matrix. Treating ${\rm tr}f(A)$ as a function of 
$\frac{n(n+1)}2$ variables $a_{ij}$ for $i\leq j$, the proof of Lemma 1.2b in \cite{GZ} implies that
\begin{equation}\label{trder} 
\sum\limits_{1\leq i\leq j\leq n}\left|\frac{\d\,{\rm tr}f(A)}{\d a_{ij}}\right|^2\leq 2n\|f'\|_\infty^2\,,
\end{equation}
where $\|f'\|_\infty:=\sup\limits_{x\in\R}|f'(x)|$. Note that $\|f'\|_\infty<\infty$ implies that $f$ is Lipschitz with the Lipschitz constant $L_f=\|f'\|_\infty$. By \eqref{trder} the gradient of ${\rm tr}f$ is also uniformly bounded on $\R^{\frac{n(n+1)}2}$, and therefore ${\rm tr}f$ is also Lipschitz with 
$L_{{\rm tr}f}\leq\sqrt{2n}\,\|f'\|_\infty$. Hence for any symmetric real matrices $A$ and $B$:
\begin{equation}\label{trLip} 
\Big|{\rm tr}f(A)-{\rm tr}f(B)\Big|\leq\sqrt{2n}\,\|f'\|_\infty\,\left(\sum\limits_{1\leq i\leq j\leq n}|a_{ij}-b_{ij}|^2\right)^\frac12\,.
\end{equation}
We can not apply \eqref{trLip} to $f(x)=|x|$ directly because it is not differentiable, but a limit argument succeeds.
\begin{lemma}\label{Liptrabs} Let $A=(a_{ij})$ and $B=(b_{ij})$ be any symmetric real matrices, then
\begin{equation}\label{trLipabs} 
\Big|{\rm tr}|A|-{\rm tr}|B|\Big|\leq\sqrt{2n}\,\left(\sum\limits_{1\leq i\leq j\leq n}|a_{ij}-b_{ij}|^2\right)^\frac12\,.
\end{equation}
\end{lemma}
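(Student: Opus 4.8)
The plan is to approximate the non-differentiable function $f(x)=|x|$ by a family of smooth functions to which the Lipschitz bound \eqref{trLip} already applies, and then pass to the limit. A convenient choice is $f_\varepsilon(x):=\sqrt{x^2+\varepsilon^2}$ for $\varepsilon>0$, which is $C^\infty$ on all of $\R$ with derivative $f_\varepsilon'(x)=\frac{x}{\sqrt{x^2+\varepsilon^2}}$. The crucial point is that $|f_\varepsilon'(x)|\le1$ for every $x$, so $\|f_\varepsilon'\|_\infty\le1$ \emph{uniformly} in $\varepsilon$. Hence \eqref{trLip} applies to each $f_\varepsilon$ with the same constant and yields
\begin{equation*}
\Big|{\rm tr}f_\varepsilon(A)-{\rm tr}f_\varepsilon(B)\Big|\leq\sqrt{2n}\,\left(\sum\limits_{1\leq i\leq j\leq n}|a_{ij}-b_{ij}|^2\right)^\frac12\,,
\end{equation*}
where the right-hand side is independent of $\varepsilon$.

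Next I would show that ${\rm tr}f_\varepsilon(A)\to{\rm tr}|A|$ as $\varepsilon\to0$, and likewise for $B$. Since $A$ is real symmetric it is orthogonally diagonalizable, $A=U\,{\rm diag}(\l_1,\dots,\l_n)\,U^{\top}$ with real eigenvalues $\l_r$, so the functional calculus gives ${\rm tr}f_\varepsilon(A)=\sum_{r=1}^nf_\varepsilon(\l_r)=\sum_{r=1}^n\sqrt{\l_r^2+\varepsilon^2}$. For each fixed $r$ one has $\sqrt{\l_r^2+\varepsilon^2}\to|\l_r|$ as $\varepsilon\to0$, and because the sum is finite this gives ${\rm tr}f_\varepsilon(A)\to\sum_{r=1}^n|\l_r|={\rm tr}|A|$, recalling from Section \ref{S1} that for symmetric $A$ the singular values are $s_r(A)=|\l_r(A)|$ so ${\rm tr}|A|=\sum_r|\l_r|$. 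The identical argument applies to $B$.

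Finally, letting $\varepsilon\to0$ in the displayed inequality, the left-hand side converges to $\big|{\rm tr}|A|-{\rm tr}|B|\big|$ while the right-hand side remains constant, which establishes \eqref{trLipabs}.

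The argument is essentially routine once \eqref{trLip} is in hand, and I do not expect a genuine obstacle. The only two points that require care are precisely the ones that make the scheme work: first, the \emph{uniform} bound $\|f_\varepsilon'\|_\infty\le1$, which is what keeps the Lipschitz constant in \eqref{trLip} fixed at $\sqrt{2n}$ as $\varepsilon\to0$ and thus allows the limit to be taken in the inequality without penalty; and second, the identification ${\rm tr}f_\varepsilon(A)=\sum_rf_\varepsilon(\l_r)$ via the spectral theorem, which reduces the convergence of the trace to the termwise convergence of a finite sum and so legitimizes the passage to the limit.
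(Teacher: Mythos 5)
Your proof is correct and takes essentially the same approach as the paper: the paper's approximating family $f_n(x)=\frac1n\sqrt{1+n^2x^2}$ is precisely your $f_\varepsilon(x)=\sqrt{x^2+\varepsilon^2}$ with $\varepsilon=1/n$, used with the same uniform bound $\|f_\varepsilon'\|_\infty\leq1$ and the same passage to the limit in \eqref{trLip}. Your explicit spectral-theorem verification that ${\rm tr}f_\varepsilon(A)\to{\rm tr}|A|$ merely spells out what the paper justifies by uniform convergence of $f_n$ to $|x|$.
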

\begin{proof} Although Lipschitz functions can not always be approximated by differentiable functions in the Lipschitz norm, $f(x)=|x|$ can be uniformly approximated on $\R$ by $C^\infty$ functions with Lipschitz constants less than or equal to $1$. For instance, by $f_n(x)=\frac1n\sqrt{1+n^2x^2}$. Since $\|f'_n\|_\infty=L_{f_n}\leq1$ and the convergence is unifrom we can pass to limit in \eqref{trLip} to get \eqref{trLipabs}.
\end{proof}
\begin{lemma}\label{LipEnrg} For $\ds{F(x)=\frac1{n\sqrt{b}}\,\E\Big(A_{n,b}(x)\Big)}$ the Lipschitz constant satisfies $L_F\leq\sqrt{\frac2b}$\,.
\end{lemma}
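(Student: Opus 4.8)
The statement asserts that $F(x)=\frac1{n\sqrt b}\,\E(A_{n,b}(x))$ is Lipschitz with $L_F\le\sqrt{2/b}$. Since $\E(A_{n,b}(x))={\rm tr}|A_{n,b}(x)|$, the natural strategy is to feed $A:=A_{n,b}(x)$ and $B:=A_{n,b}(y)$ into the trace-norm Lipschitz bound of Lemma \ref{Liptrabs}, then divide by $n\sqrt b$. The only work is to relate the matrix-entry-distance $\big(\sum_{i\le j}|a_{ij}-b_{ij}|^2\big)^{1/2}$ appearing in \eqref{trLipabs} to the vector distance $|x-y|=\big(\sum_{k=1}^b|x_k-y_k|^2\big)^{1/2}$ on $[0,R]^b$, which is the domain where the Lipschitz constant is measured.

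First I would set up the entry count. Both $A_{n,b}(x)$ and $A_{n,b}(y)$ are the circulants $\text{\rm Circ}(0,x_1,\dots,x_b,0,\dots,0,x_b,\dots,x_1)$ and the analogous one for $y$, so their difference $A-B$ is itself a symmetric band circulant whose generating entries are $0,\,x_1-y_1,\dots,x_b-y_b,\,0,\dots,0,\,x_b-y_b,\dots,x_1-y_1$. Because the bound in \eqref{trLipabs} sums over $i\le j$ only, I need to count how many matrix positions in the upper triangle carry each value $x_k-y_k$. For a circulant the entry $a_k$ (the value on the $k$-th cyclic diagonal) appears $n$ times in the full matrix; by symmetry $x_k-y_k$ sits on both the $k$-th and the $(n-k)$-th cyclic diagonals, and restricting to $i\le j$ each of the $b$ differences contributes exactly $n$ positions in the strict upper triangle. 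Hence
\begin{equation*}
\sum_{1\le i\le j\le n}|a_{ij}-b_{ij}|^2=n\sum_{k=1}^b|x_k-y_k|^2=n\,|x-y|^2.
\end{equation*}

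With this counting in hand the rest is immediate substitution. Applying Lemma \ref{Liptrabs} to $A=A_{n,b}(x)$ and $B=A_{n,b}(y)$ gives
\begin{equation*}
\Big|\,{\rm tr}|A_{n,b}(x)|-{\rm tr}|A_{n,b}(y)|\,\Big|\le\sqrt{2n}\,\big(n\,|x-y|^2\big)^{1/2}=\sqrt{2}\,n\,|x-y|,
\end{equation*}
so that
\begin{equation*}
|F(x)-F(y)|=\frac1{n\sqrt b}\Big|\,{\rm tr}|A_{n,b}(x)|-{\rm tr}|A_{n,b}(y)|\,\Big|
\le\frac{\sqrt{2}\,n}{n\sqrt b}\,|x-y|=\sqrt{\tfrac2b}\,|x-y|,
\end{equation*}
which is exactly $L_F\le\sqrt{2/b}$.

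The main obstacle is the diagonal bookkeeping, namely verifying that each $|x_k-y_k|^2$ is counted exactly $n$ times in the restricted sum $\sum_{i\le j}$ and that no off-band or corner entries sneak in. The band condition $b<\tfrac n2$ is what guarantees the $k$-th and $(n-k)$-th cyclic diagonals are distinct and that the entries $x_k-y_k$ do not overlap, so the factor of $n$ (rather than $2n$ or something smaller) is clean; I would state this dependence explicitly. Everything else is a one-line consequence of Lemma \ref{Liptrabs}, and the factor $\sqrt{2n}$ there combines with the $\sqrt n$ from the counting and the $1/(n\sqrt b)$ normalization to leave precisely $\sqrt{2/b}$.
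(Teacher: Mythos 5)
Your proposal is correct and follows essentially the same route as the paper: both apply Lemma \ref{Liptrabs} to $A_{n,b}(x)$ and $A_{n,b}(y)$, show that the upper-triangular entry sum equals $n\,|x-y|^2$, and divide by $n\sqrt{b}$. The only cosmetic difference is the bookkeeping: you count the $n-k$ and $k$ strict-upper-triangle positions on the $k$-th and $(n-k)$-th cyclic diagonals directly, while the paper notes that for a symmetric zero-diagonal matrix the upper-triangular sum is half the full sum and counts row by row; both give the same count.
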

\begin{proof} Note that if $A,B$ are symmetric with zeros on the main diagonal then
$$
\ds{\sum\limits_{1\leq i\leq j\leq n}|a_{ij}-b_{ij}|^2=\frac12\sum\limits_{i,j=1}^n|a_{ij}-b_{ij}|^2}.
$$
Applying Lemma \ref{Liptrabs}:
\begin{multline*}
\Big|{\rm tr}|A_{n,b}(x)|-{\rm tr}|A_{n,b}(y)|\Big|
\leq\sqrt{2n}\left(\frac12\sum\limits_{i,j=1}^n|a_{ij}-b_{ij}|^2\right)^\frac12\\
=\sqrt{2n}\left(\sum\limits_{i=1}^n\sum\limits_{k=1}^b|x_k-y_k|^2\right)^\frac12
=\sqrt{2n}\sqrt{n}\left(\sum\limits_{k=1}^b|x_k-y_k|^2\right)^\frac12
=n\sqrt{2}|x-y|\,.
\end{multline*}
It remains to divide both sides by $n\sqrt{b}$\,.
\end{proof}
Lemma \ref{LipEnrg} based on inequality \eqref{trLipabs} provides a much better bound on the Lipschitz constant of the normalized energy function than the direct Cauchy-Schwarz estimate, which only gives $L_F\leq\sqrt{2}$. We are now ready to prove the main result.
\begin{proof}[Proof of {\rm\bf Theorem \ref{LrgDevThm}}]
Applying the Talagrand inequality \eqref{TalaComp} to $\ds{F(x)=\frac1{n\sqrt{b}}A_{n,b}(x)}$ we have
\begin{equation}\label{TalaSub}
\mathbb{P}\{|F(x)-M_F|\geq\delta\}\leq4\,e^{-\frac{\delta^2}{4R^2\left(\sqrt{\frac2b}\right)^2}}
=4\,e^{-\frac{b}{8R^2}\delta^2}\,.
\end{equation}
Furthermore, 
\begin{multline*}
|\EE F-M_F|\leq\EE|F-M_F|=\int_0^\infty\mathbb{P}\{|F(x)-M_F|\geq\delta\}\,d\delta\\
\leq4\int_0^\infty e^{-\frac{b}{8R^2}\delta^2}\,d\delta=4\sqrt{\frac{2\pi R^2}{b}}=\delta_0(b)\,.
\end{multline*}
Since $\delta\leq|F(x)-\EE F|\leq|F(x)-M_F|+\delta_0(b)$ implies $|F(x)-M_F|\geq\delta-\delta_0(b)$ the desired estimate follows directly from \eqref{TalaSub}.
\end{proof}

\section{T\"oeplitz matrices}\label{S7}

In this section we prove Theorem \ref{ToepCirc}, which estimates the difference between normalized expected energies of symmetric band T\"oeplitz and circulant matrices. The estimate is based on the corner trick that turns a band T\"oeplitz matrix into a band circulant by altering it in the upper right and the lower left corners \cite[4.3]{Gray}.

Let $\widetilde{A}:=\text{\rm T\"oep}_s(a_0,a_1,\dots,a_b,0\dots,0)$ be a real symmetric $n\times n$ band T\"oeplitz matrix of band width $b<\frac{n}2$. Then $A:=\text{\rm Circ}(a_0,a_1,\dots,a_b,0,\dots,0,a_{b},\dots,a_{1})$ is a circulant of the same band width. One can see that the difference has the block structure
\begin{equation}\label{DifToepCirc} 
A-\widetilde{A}=\left(
\begin{array}{c|c|c}0&0&\LL\\ \hline
0&0&0\\ \hline
\LL^*&0&0\end{array}\right),
\end{equation}
where 
\begin{equation}\label{CornDif} 
\LL:=\text{\rm T\"oep}(0,\dots,0,a_b,\dots,a_1)=\begin{pmatrix}a_b&a_{b-1}&\dots&a_2&a_1\\
0&a_b&\dots&a_3&a_2\\
\vdots&\vdots&\ddots&\vdots&\vdots\\
0&0&\dots&a_b&a_{b-1}\\
0&0&\dots&0&a_b\\
\end{pmatrix}.
\end{equation}
When $n$ is large and $b\ll n$ the diffrence is relatively small and the normalized spectrum of 
$\widetilde{A}$ is well approximated by the normalized spectrum of $A$. We will estimate the trace norm of the difference.
\begin{lemma}\label{AltEst} In the notation above ${\rm tr}|A-\widetilde{A}|\leq2\sqrt{b}\,\Big(\sum_{k=1}^bk\,a_k^2\Big)^\frac12$\,.
\end{lemma}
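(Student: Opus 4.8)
The plan is to reduce the trace norm of the difference $A-\widetilde{A}$ to the trace norm of the single corner block $\LL$, and then to control the latter by its Frobenius norm through Cauchy--Schwarz. The decisive structural observation is that the off--diagonal coupling in \eqref{DifToepCirc} forces each singular value of $\LL$ to appear twice, which produces the factor $2$ in the final bound.

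First I would exploit the block structure by squaring. Since $A-\widetilde{A}$ is real symmetric, ${\rm tr}|A-\widetilde{A}|={\rm tr}\big((A-\widetilde{A})^2\big)^{\frac12}$, and a direct block multiplication of \eqref{DifToepCirc} gives
$$
(A-\widetilde{A})^2=\left(\begin{array}{c|c|c}\LL\LL^*&0&0\\ \hline 0&0&0\\ \hline 0&0&\LL^*\LL\end{array}\right).
$$
The nonzero blocks $\LL\LL^*$ and $\LL^*\LL$ share the same eigenvalues, namely the squared singular values $s_r(\LL)^2$ for $r=1,\dots,b$, while the middle block contributes only zeros. Taking positive square roots of the eigenvalues and summing yields ${\rm tr}|A-\widetilde{A}|=2\sum_{r=1}^b s_r(\LL)=2\,{\rm tr}|\LL|$.

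Next I would bound the sum of the $b$ singular values of the $b\times b$ matrix $\LL$ by Cauchy--Schwarz, using $\sum_{r=1}^b s_r(\LL)^2={\rm tr}(\LL^*\LL)=\|\LL\|_F^2$:
$$
\sum_{r=1}^b s_r(\LL)\leq\sqrt{b}\,\Big(\sum_{r=1}^b s_r(\LL)^2\Big)^{\frac12}=\sqrt{b}\,\|\LL\|_F.
$$
Finally I would read the Frobenius norm directly off \eqref{CornDif}: the entry $a_k$ lies on the superdiagonal at distance $b-k$ from the main diagonal, which in a $b\times b$ upper triangular matrix has exactly $k$ positions, so $\|\LL\|_F^2=\sum_{k=1}^b k\,a_k^2$. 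Combining the three steps gives ${\rm tr}|A-\widetilde{A}|=2\,{\rm tr}|\LL|\leq 2\sqrt{b}\,\big(\sum_{k=1}^b k\,a_k^2\big)^{\frac12}$, which is the claim.

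None of the steps is genuinely delicate; the only part requiring care is the squaring reduction, where one must verify that the antisymmetric coupling between the first and third blocks yields precisely the paired eigenvalue structure above, delivering the constant $2$. The Cauchy--Schwarz passage (which is where the bound becomes loose, being tight only when all $s_r(\LL)$ coincide) and the diagonal--counting that evaluates $\|\LL\|_F^2$ are then routine bookkeeping.
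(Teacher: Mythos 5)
Your proof is correct and follows essentially the same route as the paper: the block-structure reduction to $2\,{\rm tr}|\LL|$, the Cauchy--Schwarz bound $\sum_r s_r(\LL)\leq\sqrt{b}\,\|\LL\|_F$, and the evaluation ${\rm tr}(\LL^*\LL)=\sum_{k=1}^b k\,a_k^2$ (the paper reads this off the diagonal entries of $\LL^*\LL$ column by column, you count positions per diagonal of $\LL$ --- the same computation). The only nitpick is terminological: the coupling between the corner blocks is symmetric, not ``antisymmetric,'' since the lower-left block is $\LL^*=\LL^T$.
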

\begin{proof}
By definition of the trace norm and \eqref{DifToepCirc},
\begin{multline}\label{TrDifToepCirc} 
{\rm tr}|A-\widetilde{A}|={\rm tr}\Big((A-\widetilde{A})^*(A-\widetilde{A})\Big)^\frac12\\
={\rm tr}\left(\begin{array}{c|c|c}\LL\LL^*&0&0\\ \hline
0&0&0\\ \hline
0&0&\LL^*\LL\end{array}\right)^{\!\!\frac12}
={\rm tr}(\LL\LL^*)^\frac12+{\rm tr}(\LL^*\LL)^\frac12=2\,{\rm tr}(\LL^*\LL)^\frac12
\end{multline}
since the eigenvalues of $(\LL\LL^*)^\frac12$ and $(\LL^*\LL)^\frac12$ are the same, namely the singular values 
$s_k$ of $\LL$. By the Cauchy-Schwarz, 
\begin{equation}\label{TrAbsEst} 
{\rm tr}(\LL^*\LL)^\frac12=\sum_{k=1}^bs_k\leq\Big(\sum_{k=1}^bs_k^2\Big)^\frac12\Big(\sum_{k=1}^b1^2\Big)^\frac12
=({\rm tr}\LL^*\LL)^\frac12\,\sqrt{b}.
\end{equation}
By inspection, the diagonal entries of $\LL^*\LL$ are $a_b^2$, $a_b^2+a_{b-1}^2$,\dots, so 
${\rm tr}\LL^*\LL=\sum_{k=1}^bk\,a_k^2$.
\end{proof}
\begin{proof}[Proof of {\rm\bf Theorem \ref{ToepCirc}}]
The corner trick gives a one-to-one probability preserving correspondence between ensembles 
$\widetilde{\A}_{n,b}$ and $\A_{n,b}$. Recall that we use the same notation to denote random elements of the corresponding ensembles. For the duration of this proof however we assume that $\widetilde{\A}_{n,b}$ is chosen randomly but $\A_{n,b}$ is obtained from it by the corner trick. In particular, $\widetilde{\A}_{n,b}$ and $\A_{n,b}$ are defined on the same probability space. With this in mind,
\begin{multline}\label{ExpAbsDif} 
\frac1{n\sqrt{b}}\,\left|\EE\E(\widetilde{\A}_{n,b})-\EE\E(\A_{n,b})\right|
\leq\frac1{n\sqrt{b}}\,\EE\left|\E(\widetilde{\A}_{n,b})-\E(\A_{n,b})\right|
=\frac1{n\sqrt{b}}\,\EE\,\Big|{\rm tr}|\widetilde{\A}_{n,b}|-{\rm tr}|\A_{n,b}|\Big|\\
\leq\frac1{n\sqrt{b}}\ \EE\,{\rm tr}\Big|\widetilde{\A}_{n,b}-\A_{n,b}\Big|
\leq\frac2n\,\EE\Big(\sum_{k=1}^bk\,\xi_k^2\Big)^\frac12,
\end{multline}
where $\xi_k$ are the independent random variables from the definitions of $\widetilde{\A}_{n,b}$ and $\A_{n,b}$, and the last inequality follows from Lemma \ref{AltEst}. Since for positive $x$ the function $x^\frac12$ is concave down  for any positive random variable $\xi$ we have by the Jensen inequality \cite[II.6.5]{Shir} that 
$\EE\xi^\frac12\leq\big(\EE\xi\big)^\frac12$. By the Minkowski inequality also $\EE\xi_k^2\leq(|a|+\s)^2$, where $a,\s$ are the mean and the variance of $\xi_k$ respectively. Therefore,
\begin{equation*}
\EE\Big(\sum_{k=1}^bk\,\xi_k^2\Big)^\frac12
\leq\left(\sum_{k=1}^bk\,\EE\xi_k^2\right)^{\frac12}
\leq\left(\EE\xi_k^2\right)^{\frac12}\left(\sum_{k=1}^bk\right)^{\frac12}
=\left(\EE\xi_k^2\right)^{\frac12}\,\sqrt{\frac{b(b+1)}2}\leq(|a|+\s)\,b\,.
\end{equation*}
Combined with \eqref{ExpAbsDif} this completes the proof.
\end{proof}

\section{Conclusions and generalizations}\label{S8}

We computed the normalized asymptotic trace norm of random symmetric band circulant matrices and graphs, and estimated the rate of convergence to it (Theorem \ref{AnbThm}), and the probabilities of large deviations (Theorem \ref{LrgDevThm}). We also showed that symmetric band T\"oeplitz matrices and graphs asymptotically have the same normalized trace norms provided their band widths remain small relative to their sizes (Theorem \ref{ToepCirc}). The estimate on the convergence rate is probably optimal in the order of growth although the constants can be improved. One can not expect better than $O\left(\frac1{\sqrt{b}}\right)$ from CLT, and the Bernoullian variables are known to be the worst case \cite[III.11.1]{Shir}. The additional $\ln b$ factor accounts for non-zero means and is due to $\delta$--function like behavior of the Dirichlet kernel for large $b$, it is unlikely to be improvable either. The Talagrand inequality we used produces the optimal order of growth for other matrix ensembles \cite{GZ}, so the estimate in Theorem \ref{LrgDevThm} might be optimal as well. Proving optimality, however, is a different matter that will require new ideas.

The convergence of higher moments is a subtle question. The higher order non--uniform Berry--Esseen inequalities \cite{Daug} can be used to estimate the higher moments as in Lemma \ref{LmSumGauss}, but the lucky cancelation in the last line of \eqref{BerEs} does not occur in general. One then needs a good lower bound for $\sum_{k=1}^b\cos^2(2\pi kt)$ with large $b$, which can be obtained from a lower bound on the minimal value of the Dirichlet kernel. However, there is a bigger issue with the analogs of Lemmas \ref{MeanPart} and \ref{VarPart}. The higher order Lebesgue constants $\L_b^{(p)}:=\big(\frac1{\pi}\int_0^{\pi}|\D_b(t)|^p\,dt\big)^{\frac1p}$ grow as $b^{\frac{p-1}p}$ for $p>1$ \cite{A-S}, so $\frac{\L_b^{(p)}}{\sqrt{b}}$ does not converge to $0$ for $p\geq2$. Therefore, at least the Riemann sum/integral approach that we used will not work, and we have doubts that the spectral moments of order $p\geq2$ converge at all.

Graphs of organic molecules, which served as the original motivation for introducing graph energy, are neither band circulant nor band T\"oeplitz, but benzenoid chains with extremal values of energy do have band block--T\"oeplitz structure 
\cite{RZh1,RZh2}. This means that their adjacency matrices look like band T\"oeplitz matrices with entries replaced by square matrix blocks of fixed size. By the same corner trick from Section \ref{S7} band block--T\"oeplitz matrices can be modified into band block--circulants, and the eigenvalues of the latter can be explicitly expressed via the eigenvalues of the blocks \cite{Tee}. It would be interesting to prove block analogs of Theorems \ref{AnbThm}--\ref{ToepCirc} using matrix--valued versions of CLT and of the Talagrand inequality. Deterministic spectral limits for symmetric block--T\"oeplitz matrices are studied e.g. in \cite{MTi}. It would also be interesting to extend the results of this paper to the Ky Fan norms \cite{NikB}, the incidence energy of graphs \cite{DG}, and skew energy of directed graphs \cite{CLL}.

\end{document}